\documentclass[11pt]{article}  
\usepackage{amsthm}

\makeatletter
\newtheorem*{rep@theorem}{\rep@title}
\newcommand{\newreptheorem}[2]{%
\newenvironment{rep#1}[1]{%
 \def\rep@title{#2 \ref{##1}}%
 \begin{rep@theorem}}%
 {\end{rep@theorem}}}
\makeatother

\usepackage{amssymb, amsmath}
\usepackage{verbatim}
\newtheorem{theorem}{Theorem}
\newtheorem{definition}{Definition}
\newtheorem{lemma}{Lemma}
 \newtheorem{prop}{Proposition}
 \newtheorem{corollary}{Corollary}
 
 \newreptheorem{theorem}{Theorem}

\usepackage[margin=.5in,footskip=0.25in]{geometry}
\newgeometry{left=0.5in,right=0.5in,top=.6in,bottom=1in}
 \begin{document}

 \title{Restricted Carleson Variations at Endpoint and Discretized Hilbert Transforms in the Plane}
 \author{Robert M. Kesler}
 \maketitle

 \abstract{We provide elementary proofs that the 2-variation Carleson operator $V_2$ along with explicit bilinear multipliers adapted to $\{\xi_1 + \xi_2 = 0\}$ satisfy no $L^p$ estimates. Furthermore, we obtain $L^p \rightarrow L^p$ estimates when $2 < p <\infty$ for a smooth restricted variant of $V_2$ that is defined a priori  on Schwartz functions by the formula

\begin{eqnarray*}
\mathcal{V}^{res}_2 : f \mapsto \sup_{R \in \mathbb{R}_+} ~~\sup_{0 \leq \alpha  < R} ~~\left(\sum_{j \in \mathbb{Z}}  \left|f*\mathcal{F}^{-1} \left[ \tilde{1}_{[\alpha + j R, \alpha + (j+1)R]}\right] \right|^2 \right)^{1/2} 
\end{eqnarray*}
where $\tilde{1}_{I} (x) := \tilde{1}(|I|^{-1} (x-c_I))$ for all intervals $I = [c_I - |I|/2, c_I + |I|/2] \subset \mathbb{R}$ and $\tilde{1} \in C^\infty([-1/2, 1/2])$. We then study bi-sublinear variants of $\mathcal{V}_2^{res}$ before showing that multipliers, which are adapted to $\{\xi_1 + \xi_2=0\}$ and periodically discretized along each frequency scale, map $L^{p_1}(\mathbb{R}) \times L^{p_2}(\mathbb{R}) \rightarrow L^{p_1 p_2 / (p_1 + p_2)}(\mathbb{R})$ provided $2 \leq p_1, p_2 <\infty$ and $\frac{1}{p_1} + \frac{1}{p_2} <1$. 

\tableofcontents
 \section{Introduction}
R. Oberlin, A. Seeger, T. Tao, C. Thiele, and J. Wright  prove in \cite{MR2881301} that the r-variation Carleson operator defined for $r>0$ and $f \in \mathcal{S}(\mathbb{R})$ by the formula
 
 \begin{eqnarray*}
 V_r: f \mapsto \sup_{K \in \mathbb{N}} ~~\sup_{N_1 < N_2<... <N_K} \left(\sum_{j=1}^{K-1} \left| \int_{N_j}^{N_{j+1}} \hat{f}(\xi) e^{2 \pi i \xi x} d \xi \right|^r \right)^{1/r}
 \end{eqnarray*}
 extends to a continuous map of $L^p(\mathbb{R})$ into $L^p(\mathbb{R})$  for all $r^\prime<p<\infty$ and $r>2$.  With this notation, $V_\infty$ is the Carleson operator $C:L^p (\mathbb{R}) \rightarrow L^p(\mathbb{R})$ for $1 < p <\infty$ and $V_1(f)=||\hat{f}||_1 1_{\mathbb{R}}$ for all $f$ belonging to the Wiener algebra. That $r \geq 2$ is necessary for estimates to hold is immediate by routine arguments using Rademacher functions. Moreover, estimates at the variational endpoint $r=2$ are ruled out by applying probabilistic arguments of Qian in \cite{MR1640349} combined with variation estimates developed by Jones and Wang in \cite{MR2067131}. Our first result provides a simple and direct counterexample to the boundedness of $V_2$ and manages to say a bit more through the use of Gaussian chirps like those appearing in work of Muscalu, Tao, and Thiele  \cite{MR1981900}. Before mentioning the precise statement, we introduce
 
 \begin{definition}
For any $f : \mathbb{R}^n \rightarrow \mathbb{C}$ and interval $I  = [ c_I - |I|/2, c_I + |I|/2] \subset \mathbb{R}$, let $f_I (x) := f(|I|^{-1}(x-c_I)) $. 
\end{definition}
 \begin{theorem}\label{P1}
 There exists $\tilde{1} \in C^{\infty}([-1/2,1/2])$ and an almost disjoint collection of intervals $\{I\}=\mathcal{I}$ so that
 \begin{eqnarray*}
\mathcal{V}_2 : f \mapsto \sup_{\tau \in \mathbb{R}} \left( \sum_{I \in \mathcal{I}} \left|f* \mathcal{F}^{-1} \left[ \tilde{1}_{I+\tau} \right] \right|^2 \right)^{1/2}
 \end{eqnarray*}
satisfies no $L^p$ estimates. 
 \end{theorem}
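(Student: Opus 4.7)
My plan is to mimic the Gaussian-chirp constructions of Muscalu--Tao--Thiele, exploiting the supremum over translates $\tau$ to realize, pointwise in $x$, the lacunary-in-$t^{2}$ partitions on which the Fresnel integral has logarithmic $2$-variation.

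Specifically, I would take $\tilde 1 \equiv 1$ on $[-1/2, 1/2]$, so that $f * \mathcal F^{-1}[\tilde 1_{I+\tau}]$ is, to leading order, the sharp Fourier truncation $S_{I+\tau}f(x) = \int_{I+\tau}\hat f(\xi)e^{2\pi i\xi x}\,d\xi$. Then choose scales $N_k \uparrow \infty$ and frequency centers $A_k$ with $A_{k+1} - A_k > 100(N_k + N_{k+1})$, and set
\begin{eqnarray*}
\mathcal I_k = \bigl\{[A_k + \sqrt{N_k j},\, A_k + \sqrt{N_k(j+1)}] : 0 \le j \le N_k - 1\bigr\}, \qquad \mathcal I = \bigcup_k \mathcal I_k.
\end{eqnarray*}
This is a pairwise disjoint collection whose intervals within the $k$-th band have lengths $\approx \sqrt{N_k}/(2\sqrt j)$. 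As test functions, take the chirps $f_k(x) = \phi(x)e^{i\pi N_k x^2}e^{2\pi i A_k x}$ for a fixed smooth bump $\phi$ supported in $[0,1]$; then $\|f_k\|_p = \|\phi\|_p$ is independent of $k$ and $\hat f_k$ is essentially supported in $[A_k, A_k + N_k]$.

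Completing the square in the frequency integral yields
\begin{eqnarray*}
S_{[A_k + \sqrt{N_k j} + \tau,\, A_k + \sqrt{N_k(j+1)} + \tau]}f_k(x) \approx f_k(x)\bigl[F(s_{j+1}) - F(s_j)\bigr],
\end{eqnarray*}
where $F(t) = \int_0^t e^{-i\pi s^2}\,ds$ is the Fresnel integral and $s_j = \sqrt j + \tau/\sqrt{N_k} - \sqrt{N_k}\,x$. The pointwise choice $\tau(x) = N_k x$, permissible because the supremum over $\tau$ sits outside the $\ell^2$ sum, realigns the endpoints to $s_j = \sqrt j$, yielding the lacunary-in-$t^{2}$ spacing. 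The standard computation
\begin{eqnarray*}
F(\sqrt{j+1}) - F(\sqrt j) = \tfrac{1}{2}\int_j^{j+1}\frac{e^{-i\pi u}}{\sqrt u}\,du = \frac{(-1)^j}{i\pi\sqrt j}(1 + O(1/j))
\end{eqnarray*}
then gives $\sum_{j=1}^{N_k}|F(\sqrt{j+1}) - F(\sqrt j)|^2 \sim \pi^{-2}\log N_k$. Spectral separation together with $|\tau| = O(N_k)$ forces intervals from $\mathcal I_{k'}$, $k' \ne k$, translated by $\tau$, to remain disjoint from the essential support of $\hat f_k$, so they contribute non-negatively but negligibly. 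Consequently $\mathcal V_2 f_k(x) \gtrsim |\phi(x)|\sqrt{\log N_k}$, and since $\|f_k\|_p$ is constant in $k$, $\|\mathcal V_2 f_k\|_p/\|f_k\|_p \to \infty$ as $k \to \infty$, so $\mathcal V_2$ satisfies no $L^p$ estimate.

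The main obstacle is controlling the stationary-phase error uniformly in $1 \le j \le N_k$: the leading-order identification $\hat f_k(\xi) \approx N_k^{-1/2}\phi((\xi - A_k)/N_k)e^{-i\pi(\xi - A_k)^2/N_k}$ degrades at the spectral edges $\xi \approx A_k$ and $\xi \approx A_k + N_k$, and the per-interval approximation must be sharp enough that edge errors do not spoil the harmonic-series gain $\sum 1/j$ driving the $\log N_k$ divergence. This is where a Gaussian-like profile for $\phi$ pays off, since repeated integration by parts gives rapid decay of all error terms, and only an $O(1)$ number of edge indices fail the asymptotic $(-1)^j/(i\pi\sqrt j)$, which is harmless for a $\log$-type divergence.
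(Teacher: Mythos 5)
Your proposal is correct in outline but takes a genuinely different route from the paper. The paper's proof avoids oscillatory integrals entirely: it fixes $\mathcal{I}_k=\{[k+\frac{l}{k},k+\frac{l+1}{k}]:0\le l<k\}$, tests against the wave-packet train $f_N=\sum_{1\le n\le N}\phi(\cdot-n)e^{2\pi i n\cdot}$ with $\hat\phi\equiv 1$ on $[-1/4,1/4]$, and for $x\in[j,j+1]$ takes $\tau=j$; each of the $\sim k$ intervals of $\mathcal{I}_k+j$ lying in the frequency support of the single relevant packet contributes exactly $|I|\,|\check{\tilde 1}(|I|(x-k-j))|\gtrsim 1/k$ (an exact computation, since $\hat\phi\equiv 1$ there), so each band adds $\sim 1/k$ to the square and the logarithm comes from summing $1/k$ over the $\sim N$ bands. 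You instead put the entire logarithm into one band: square-root-spaced endpoints, a chirp $\phi(x)e^{i\pi N_kx^2}e^{2\pi iA_kx}$, and the pointwise choice $\tau(x)=N_kx$ to center the partition at the instantaneous frequency, so the increments become Fresnel differences $F(\sqrt{j+1})-F(\sqrt j)\sim j^{-1/2}$. This is precisely the Muscalu--Tao--Thiele chirp mechanism the introduction alludes to; it makes the link to the criticality of $r=2$ transparent and keeps $\|f_k\|_p$ bounded, at the price of the stationary-phase error analysis that the paper's choice of test function never needs.

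Two points need repair, both fixable. First, with the cutoff $\phi$ present the leading amplitude of the $j$-th increment at $x$ is $\phi(x+\sqrt{j/N_k})$, not $\phi(x)$: your claim that only $O(1)$ edge indices fail the asymptotic is inaccurate, since for fixed $x$ a constant proportion of the large indices can be wiped out once $x+\sqrt{j/N_k}$ leaves the support of $\phi$. The conclusion survives because you only need a lower bound and the surviving range $1\le j\le cN_k$ (any power of $N_k$ would do) still yields $\sum 1/j\gtrsim\log N_k$, but this should be said explicitly. Second, taking $\tilde 1\equiv 1$ turns the multiplier into the sharp cutoff $1_{I+\tau}$, which conflicts with the intended smooth-variant reading of $\tilde 1\in C^\infty([-1/2,1/2])$ (the paper handles sharp cutoffs only in a remark after its proof). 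The fix is cheap: run the same computation with a nonnegative even bump $\tilde 1$; in the variable $u=s^2$ the $j$-th term becomes $\tfrac12\int_j^{j+1}\tilde 1(\cdot)\,u^{-1/2}e^{-i\pi u}\,du$, whose modulus is $\approx j^{-1/2}\left|\int_{-1/2}^{1/2}\tilde 1(t)e^{-i\pi t}\,dt\right|$ up to $O(j^{-3/2})$, and the constant is positive since $\int_{-1/2}^{1/2}\tilde 1(t)\cos(\pi t)\,dt>0$, so the per-interval lower bound $\gtrsim j^{-1/2}$ and the $\log N_k$ divergence persist.
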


Next, a corollary of L. Grafakos and N. Kalton's work in \cite{MR1853518} is that symbols $m: \mathbb{R}^2 \rightarrow \mathbb{C}$ adapted to the singularity $\Gamma = \{\xi_1 + \xi_2 =0\}$ in the Mikhlin-H\"ormander sense that
 
 \begin{eqnarray*}
\left |\partial^{\vec{\alpha}} m(\vec{\xi})\right| \lesssim_{\vec{\alpha}} \frac{1}{dist(\vec{\xi}, \Gamma)^{|\vec{\alpha}|}}
 \end{eqnarray*}
 for arbitrarily many multi-indices $\vec{\alpha}$ need not be bounded operators on any $L^p(\mathbb{R})$ spaces. In \S{3} we construct an explicit counterexample: 

 \begin{theorem}\label{P2}
 There exists a multiplier $m :\mathbb{R}^2 \rightarrow \mathbb{C}$ adapted to the singularity $\Gamma= \{ \xi_1 + \xi_2 = 0\}$ satisfying 
 
 \begin{eqnarray*}
 \left| \partial^{\vec{\alpha}} m(\vec{\xi})\right| \lesssim_{\vec{\alpha}} \frac{1}{|dist( \vec{\xi}, \Gamma)|^{|\vec{\alpha}|}}
 \end{eqnarray*}
 for all multi-indices $\vec{\alpha}$ such that $T_m: (f_1, f_2) \mapsto \int_{\mathbb{R}^2} m(\xi_1, \xi_2) \hat{f}_1(\xi_1) \hat{f}_2(\xi_2) e^{2 \pi i x (\xi_1 + \xi_2)} d \xi_1 d \xi_2$ (for all $\vec{f} \in \mathcal{S}(\mathbb{R})^2$) satisfies no $L^p$ estimates. 
 \end{theorem}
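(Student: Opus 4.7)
The plan is to reduce the failure of $T_m$ to the failure of $\mathcal{V}_2$ established in Theorem \ref{P1}. Using the almost disjoint family $\mathcal{I}$ and the smooth bump $\tilde{1}$ produced there, I would set
\[
m(\xi_1,\xi_2) \;:=\; \sum_{I \in \mathcal{I}} \tilde{1}_I(\xi_1)\,\tilde{1}_{-I}(\xi_2),
\]
where $-I := \{-y : y \in I\}$. Each summand is supported in the square $I\times(-I)$ of side length $|I|$ centered on $(c_I,-c_I) \in \Gamma$; its $\vec\alpha$-derivatives are bounded by $|I|^{-|\vec\alpha|}$. Because $\mathcal{I}$ is almost disjoint, at most $O(1)$ summands contribute at any fixed $\vec\xi$, and the Mikhlin--Hörmander bound $|\partial^{\vec\alpha} m(\vec\xi)| \lesssim \operatorname{dist}(\vec\xi,\Gamma)^{-|\vec\alpha|}$ follows for every multi-index $\vec\alpha$.

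Next I would argue by contradiction: suppose $T_m$ maps $L^{p_1}(\mathbb{R})\times L^{p_2}(\mathbb{R}) \to L^{p_3}(\mathbb{R})$ for some admissible triple. For $f_\tau(x) := e^{2\pi i \tau x}f(x)$ and a Schwartz $g$ with $\hat g \equiv 1$ on a neighborhood of $-\bigcup_{I \in \mathcal{I}_0} I$ (where $\mathcal{I}_0 \subset \mathcal{I}$ is a finite sub-collection to be chosen), a direct computation yields
\[
T_m(f_\tau,g)(x) \;=\; e^{2\pi i \tau x}\sum_{I\in\mathcal{I}_0}\, \bigl(f*\mathcal{F}^{-1}[\tilde{1}_{I-\tau}]\bigr)(x)\cdot \mathcal{F}^{-1}[\tilde{1}_{-I}](x)
\]
up to negligible error. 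Replacing $m$ by $m_\epsilon := \sum_I \epsilon_I \tilde{1}_I(\xi_1)\tilde{1}_{-I}(\xi_2)$, whose Mikhlin bounds are preserved uniformly in $\vec\epsilon \in \{\pm 1\}^{\mathcal{I}}$, and applying Khintchine's inequality in $L^{p_3}$ then produces, for each fixed $\tau$,
\[
\Big\| \Big(\sum_{I\in\mathcal{I}_0}|f*\mathcal{F}^{-1}[\tilde{1}_{I-\tau}]|^2\Big)^{1/2}\cdot |\mathcal{F}^{-1}\tilde{1}|\, \Big\|_{p_3} \;\lesssim\; \|f\|_{p_1}\|g\|_{p_2},
\]
i.e., a bound on a one-$\tau$ slice of $\mathcal V_2$ restricted to $\mathcal{I}_0$.

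The main obstacle is that Theorem \ref{P1} asserts failure of the $L^{p_3}$ norm of the \emph{pointwise} supremum $\sup_\tau(\sum_I|f*\mathcal{F}^{-1}[\tilde{1}_{I+\tau}]|^2)^{1/2}$, whereas the reduction above only controls $\sup_\tau$ of $L^{p_3}$ norms -- a strictly weaker quantity. The remedy is to enlarge $m$ by summing also over a suitably sparse sequence $\{\tau_k\}\subset \mathbb R$ of translations, together with Rademacher randomization over $(I,k)$, so that one application of the bilinear multiplier recovers a Marcinkiewicz-type pointwise dominant for $\mathcal V_2(f)$. One must then verify that this enlarged symbol still satisfies the Mikhlin--Hörmander condition on $\Gamma$ (the $\tau_k$'s should be spaced at scale $|I|$ for each $I$, and the almost-disjointness of $\mathcal{I}$ should propagate to the enlarged collection) and that, after randomization and exhaustion over finite $\mathcal{I}_0$, the resulting estimate remains incompatible with Theorem \ref{P1}. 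Managing this interplay between the variational supremum and the Mikhlin regularity is the technical crux of the argument.
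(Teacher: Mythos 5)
Your base symbol cannot witness the theorem, because it is in fact a bounded operator in a large range of exponents: since the squares $I\times(-I)$, $I\in\mathcal{I}$, have bounded overlap in each frequency variable separately, pointwise Cauchy--Schwarz in $I$, H\"older, and Rubio de Francia's square function theorem give $\| T_m(f_1,f_2)\|_{p_1p_2/(p_1+p_2)}\lesssim \|(\sum_{I}|f_1*\mathcal{F}^{-1}[\tilde{1}_I]|^2)^{1/2}\|_{p_1}\,\|(\sum_{I}|f_2*\mathcal{F}^{-1}[\tilde{1}_{-I}]|^2)^{1/2}\|_{p_2}\lesssim\|f_1\|_{p_1}\|f_2\|_{p_2}$ for all $2\le p_1,p_2<\infty$ (and $L^2\times L^2\to L^1$ needs only Plancherel). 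So no contradiction can be extracted from this $m$, and your own reduction already signals this: it controls one $\tau$-slice at a time, and each term carries the factor $\mathcal{F}^{-1}[\tilde{1}_{-I}]$, of size $\lesssim|I|=1/k$ on $\mathcal{I}_k$, which turns the $\sum_k 1/k$ divergence of Theorem \ref{P1} into a convergent $\sum_k k^{-3}$ even before the supremum issue. The ``remedy'' you defer is therefore the entire proof, and it meets a structural obstruction: a nontrivial bump on an $|I|\times|I|$ square has a first derivative of size $\sim|I|^{-1}$ somewhere on its support, so the $|\vec\alpha|=1$ Mikhlin bound (with a fixed constant) already forces that square to lie within distance $O(|I|)$ of $\Gamma$. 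Consequently, translating the $\xi_1$-support of a summand by $\tau$ forces its $\xi_2$-support to move by $-\tau+O(|I|)$. You must then either keep the $g$-side projection pinned near $-I$, in which case only $|\tau|\lesssim|I|\le1$ is admissible --- hopeless for simulating the supremum in Theorem \ref{P1}, where the relevant translations $\tau\simeq j$ run up to $N/2$ --- or let both supports move together, in which case both families again have bounded overlap, the Rubio de Francia bound above reapplies, and the $g$-factor is no longer a single fixed profile, so the output no longer dominates the square function of $f$ that you want. Randomization does not bridge the remaining gap either: Khintchine produces $\ell^2$-sums, never pointwise suprema, and a fixed symbol cannot emulate the measurable, $x$-dependent choice $\tau=\tau(x)\simeq x$ that drives the counterexample of Theorem \ref{P1}.

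The paper's proof is not a reduction to Theorem \ref{P1}. It places, at each scale $2^{-k}$, roughly $2^{k}$ bumps per unit frequency length on squares of side $2^{-k}$ whose centers $(m+\lambda2^{-k},\,-m-\lambda2^{-k}+\Gamma2^{-k})$, $|\lambda|<2^{k-8}$, sit at distance $\simeq\Gamma2^{-k}$ from $\{\xi_1+\xi_2=0\}$; the resulting $\sim2^{k}$-fold overlap of the $\xi_2$-projections at each scale is exactly the feature that defeats the Cauchy--Schwarz/Rubio de Francia argument above. Testing on $f_1^N=\sum_{1\le n\le N}\phi(\cdot-n)e^{2\pi in\cdot}$ and $f_2^N=\sum_{1\le n\le N}\phi(\cdot-n)e^{-2\pi in\cdot}$, the modulations cancel in each product, so the $\sim2^{k}$ contributions at scale $k$ add coherently and the scale-$k$ piece has modulus $\gtrsim1$ on $[1,N]$ for all $C_\Gamma\le k\lesssim\log N$; since these pieces have pairwise disjoint output frequency supports near $\Gamma2^{-k}$, Littlewood--Paley yields $\|T_m(f_1^N,f_2^N)\|_{p_1p_2/(p_1+p_2)}\gtrsim\log^{1/2}(N)\,N^{1/p_1+1/p_2}$ against $\|f_1^N\|_{p_1}\|f_2^N\|_{p_2}\simeq N^{1/p_1+1/p_2}$. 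If you wish to salvage your plan, the ingredient to import is this unbounded overlap of frequency supports in one variable along each scale, realized by squares a fixed multiple of their side length away from $\Gamma$, rather than any transference from the translated collection $\mathcal{I}$ of Theorem \ref{P1}.
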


In \S{4} we prove $L^p(\mathbb{R}) \rightarrow L^p(\mathbb{R})$ estimates when $2 <p <\infty$ for a smooth restricted variant of $V_2$ denoted by $\mathcal{V}_2^{res}$, which is defined via the formula
\begin{eqnarray*}
\mathcal{V}^{res}_2 : f \mapsto \sup_{R \in \mathbb{R}} ~~\sup_{0 \leq \alpha  < R} ~~\left(\sum_{j \in \mathbb{Z}}  \left|f*\mathcal{F}^{-1} \left[ \tilde{1}_{[\alpha + j R, \alpha + (j+1)R]}\right] \right|^2 \right)^{1/2},
\end{eqnarray*}
Note that in moving from $V_2$ to $\mathcal{V}_2^{res}$ we have replaced sharp frequency cutoffs over all increasing sequences $N_1 < ... < N_K$ by mollified cutoffs only over those increasing sequences $N_1 < N_2 < ... < N_K$ for which $N_{j+2} - N_{j+1}= N_{j+1}- N_j$ for all $j \in \{1, ..., K-2\}$. The proof of $\mathcal{V}_2^{res}$ estimates relies on bounding a straightforward time-frequency model.  

In \S{5} and \S{6} we study some bi-sublinear variants of $\mathcal{V}_2$ defined for a set $\Sigma = \{ \sigma \}$ and $\hat{\eta} \in C^\infty([-1/2, 1/2])$ by the formula

\begin{eqnarray*}
\mathcal{M}^{\Sigma}_\eta : (f,g) \mapsto \sup_{\sigma \in \Sigma} \left| \sum_{\tau \in \mathbb{Z}} f*\mathcal{F}^{-1} \left[ \hat{\eta}_{I_\tau^\sigma} \right] g*\mathcal{F}^{-1} \left[ \hat{\eta}_{I^\sigma_0} \right] \right| =: \sup_{\sigma \in \Sigma} \mathcal{M}^\sigma _\eta (f,g) 
\end{eqnarray*}
where $I_\tau ^\sigma := [ \tau \sigma -\sigma/2, \tau \sigma + \sigma/2]$ for each $\tau \in \mathbb{Z}$ and $\sigma \in \mathbb{R}_+$. 
At each scale $\sigma \in \Sigma, \mathcal{M}^\sigma_\eta$  has an adjoint operator at the same scale adapted to $\{\xi_1 + \xi_2 = 0\}$.  Then, by Cauchy-Schwarz, the supremum of the adjoint over all scales satisfies some estimates. However, our main result in these sections shows that no non-trivial mixed estimates for  $\mathcal{M}^\Sigma_\eta$ are possible provided $|\Sigma| = \infty$ and $\eta \not \equiv 0 $ has all non-negative Fourier coefficients. The trivial mixed estimates take the form $W_\infty (\mathbb{R}) \times L^{p_2}(\mathbb{R}) \rightarrow L^{p_2}(\mathbb{R})$ with $1 < p_2 \leq \infty$. That is, we have

\begin{theorem}\label{P3}
Let $\hat{\eta} \in C^\infty ([-1/2,1/2])$ satisfy $\eta \gtrsim 1_{[-1,1]}$.  Let $\Sigma \subset \mathbb{R}$ satisfy $\#\left\{ \Sigma \right\}=\infty$.  Then

\begin{eqnarray*}
\mathcal{M}^\Sigma_\eta : (f, g) \mapsto \sup_{\sigma \in \Sigma} \left| \sum_{\tau \in \mathbb{Z}} f*\mathcal{F}^{-1} \left[ \hat{\eta}_{I_\tau^\sigma} \right] g*\mathcal{F}^{-1} \left[ \hat{\eta}_{I^\sigma_0} \right] \right|
\end{eqnarray*}
maps $W_{p_1}(\mathbb{R}) \times L^{p_2}(\mathbb{R}) \rightarrow L^{p_1p_2/(p_1 + p_2)}(\mathbb{R})$ iff $p_1 = \infty, 1 < p_2 \leq \infty$.
\end{theorem}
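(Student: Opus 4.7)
The plan is to address the two directions separately.

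For the sufficient direction ($W_\infty \times L^{p_2} \to L^{p_2}$, $1 < p_2 \le \infty$), I first rewrite the single-scale operator via Poisson summation:
\[
\sum_{\tau \in \mathbb{Z}} f * \mathcal{F}^{-1}[\hat\eta_{I^\sigma_\tau}](x) = \sum_{k \in \mathbb{Z}} \eta(k)\,f(x - k/\sigma), \qquad g * \mathcal{F}^{-1}[\hat\eta_{I^\sigma_0}] = g * \sigma\eta(\sigma\cdot).
\]
Since $\hat\eta$ is smooth with compact support, $\eta$ is Schwartz, so $\sum_k |\eta(k)| < \infty$ and the first factor is dominated pointwise by $C_\eta \|f\|_\infty$ uniformly in $\sigma$. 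The second factor is a convolution against an $L^1$-normalized approximate identity, hence bounded pointwise by $\|\eta\|_1 Mg$ where $M$ is the Hardy--Littlewood maximal function. Combining these yields $\mathcal{M}^\Sigma_\eta(f, g) \lesssim \|f\|_{W_\infty} Mg$, which gives the $L^{p_2}$ estimate for $1 < p_2 \le \infty$ via the boundedness of $M$.

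For the endpoint $p_2 = 1$, I take $f$ to be a careful approximation of the constant function within $W_\infty$; this makes the first factor essentially constant in $\sigma$, so $\mathcal{M}^\Sigma_\eta(f, g)$ dominates $Mg$, and the classical unboundedness of $M$ on $L^1$ rules out this case.

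The core part is the necessity for $p_1 < \infty$. Here I use the dilation invariance $\|\mathcal{M}^\Sigma_\eta\| = \|\mathcal{M}^{\lambda\Sigma}_\eta\|$ (verified by matching the scaling exponents through $1/p = 1/p_1 + 1/p_2$). The construction selects $N$ distinct elements of $\Sigma$, rescales so their reciprocals are pairwise separated by at least $2$, and takes test functions $f = 1_{[0,1]}$ and $g_N = 1_{[-R, R]}$ with $R$ the maximum reciprocal plus $1$. At each selected scale $\sigma_n$, the $k = 1$ summand in the first factor contributes a bump of height $\eta(1) > 0$ on the disjoint interval $[1/\sigma_n, 1/\sigma_n + 1] \subset [-R, R]$, while $g_N * \sigma_n \eta(\sigma_n \cdot)$ is bounded below by a positive constant there (using $\sigma_n R \geq 1$ and $\eta \gtrsim 1_{[-1,1]}$). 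Disjointness of the bumps forces $\|\mathcal{M}^\Sigma_\eta(f, g_N)\|_p \gtrsim N^{1/p}$, while $\|f\|_{W_{p_1}} \lesssim 1$ and $\|g_N\|_{p_2} \lesssim N^{1/p_2}$, producing divergence of order $N^{1/p_1}$.

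The main obstacle is the extraction step: this works cleanly when $\Sigma$ contains a subsequence converging to $0$ or to $\infty$, but when $\Sigma$ accumulates only at some finite positive $\sigma_*$, the reciprocal gaps for $N$ selected scales shrink at least like $1/N^2$, forcing $R \sim N^2$ after rescaling and weakening the estimate. Addressing this residual case will likely require a dual construction exchanging the roles of $f$ and $g$, or else a more delicate perturbative analysis near $\sigma_*$ exploiting the positivity of $\eta$'s Fourier coefficients to prevent cancellation of the leading corrections.
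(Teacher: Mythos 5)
Your positive direction is fine and is essentially the paper's trivial bound: writing the first factor as $\sum_k \eta(k) f(x-k/\sigma)$ and the second as convolution with $\sigma\eta(\sigma\cdot)$ gives $\mathcal{M}^\Sigma_\eta(f,g)\lesssim \|f\|_{W_\infty} Mg$. The problem is the necessity direction, which is the heart of the theorem, and there your construction has a genuine gap. You take $f=1_{[0,1]}$, select $N$ scales, rescale so the reciprocals $1/\sigma_n$ are $2$-separated, and set $g_N=1_{[-R,R]}$ with $R$ the largest rescaled reciprocal; you then claim $\|g_N\|_{p_2}\lesssim N^{1/p_2}$. That bound requires the $N$ selected reciprocals to sit, after a single dilation, inside an interval of length $O(N)$ while being $2$-separated, i.e.\ to form an essentially bounded-gap (progression-like) configuration. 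A generic infinite $\Sigma$ offers no such selection: for $\Sigma=\{2^k\}$ (or $\{2^{-k}\}$, or $\{2^{2^k}\}$) the $2$-separated rescaled reciprocals necessarily spread over an interval of length exponential in $N$, so $\|g_N\|_{p_2}\simeq R^{1/p_2}$ swamps the gain $N^{1/p_1+1/p_2}$ and the quotient tends to $0$, not $\infty$. So your case analysis is inverted: lacunary sets accumulating at $0$ or $\infty$ are not the ``clean'' case but precisely where your construction collapses (and the accumulation-at-$\sigma_*$ case can be even worse than your $R\sim N^2$ estimate, since consecutive reciprocal gaps can decay exponentially). Your proposed fixes (swapping the roles of $f$ and $g$, or perturbing near $\sigma_*$) do not address the lacunary case at all.

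The missing mechanism, which is how the paper proceeds, is to keep $g$ fixed with $\|g\|_{p_2}\simeq 1$ and to put all the growth into $f$ measured in the Wiener norm. After extracting $k_0$ rapidly increasing scales, Lemma \ref{HL} produces a single spacing $\theta_{k_0}\sim 1/k_0$ such that the normalized scales are essentially equidistributed modulo the corresponding period ($\|\alpha_j\theta_{k_0}\|_{\mathbb{T}}\sim j/k_0$). One then takes $f_{k_0}$ to be a train of $\sim\sigma_{k_0}\theta_{k_0}$ narrow bumps along an arithmetic progression inside a unit interval: the translation structure of the single-scale operators, combined with the modular distinctness from the lemma, makes the sup over the $k_0$ scales of size $\gtrsim 1$ on a set of measure $\simeq 1$, while the Hausdorff--Young quasi-extremizer computation for arithmetic progressions (the Dirichlet-kernel estimate carried out in the preceding dyadic proposition) gives $\|\hat f_{k_0}\|_{p_1'}\simeq k_0^{-1/p_1}$. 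The unboundedness then comes from the ratio $k_0^{1/p_1}\to\infty$ with $g$ fixed, with no competing growth in $\|g\|_{p_2}$. Without an ingredient of this type (a common modulus valid for arbitrary infinite, in particular lacunary, $\Sigma$, plus the Wiener-norm smallness of the bump train), your argument establishes necessity only for very special sets $\Sigma$ and does not prove the theorem.
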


Lastly, we investigate in \S{7} a special collection of discretized Hilbert transforms and conclude that any bilinear multiplier with a symbol adapted to the singular line $\{\xi_1 + \xi_2 =0\}$ in the H\"{o}rmander-Mikhlin sense with the added property that along each scale the frequency projections are equally-spaced translated copies of each other maps $L^{p_1}(\mathbb{R}) \times L^{p_2}(\mathbb{R}) \rightarrow L^{p_1 p_2 /(p_1 + p_2)}(\mathbb{R})$ provided $2 \leq p_1, p_2 < \infty$ and $\frac{1}{p_1} + \frac{1}{p_2}$.  More precisely, the following is true:

 \begin{theorem}\label{P4}
Fix $ \hat{\eta} \in C^{\infty} ([-1/2, 1/2])$. Let $m : \mathbb{R}^2 \rightarrow \mathbb{R}$ be given by

\begin{eqnarray*}
m(\xi_1, \xi_2) = \sum_{\vec{P} \in \mathbb{P}} \hat{\eta}_{P_1}(\xi_1)  \hat{\eta}_{P_2}(\xi_2)
\end{eqnarray*}
where

\begin{eqnarray*}
\mathbb{P} := \bigcup_{k \in \mathbb{Z}} \bigcup_{m \in \mathbb{Z}}\left\{  \left[m2^{-k}-2^{-k-1}, m2^{-k} + 2^{-k-1}\right] \times  \left[m2^{-k}-2^{-k-1}+\Gamma 2^{-k} , m2^{-k} + 2^{-k-1}+\Gamma 2^{-k}\right]  \right\}.
\end{eqnarray*}
Then, for every pair $(p_1, p_2)$ such that $2 \leq p_1, p_2 <\infty$ with $\frac{1}{p_1} + \frac{1}{p_2} <1$

\begin{eqnarray*}
|| T_m(f_1, f_2) || _{\frac{p_1 p_2 }{p_1 + p_2}} \lesssim_{\vec{p}} || f_1||_{p_1} || f_2||_{p_2}.
\end{eqnarray*}

\end{theorem}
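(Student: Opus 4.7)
My plan is to reduce Theorem \ref{P4} to a discrete time-frequency model that exploits the equally-spaced translated structure of $\mathbb{P}$, then to bound this model via a combination of Rubio de Francia square functions at each scale and a bilinear almost-orthogonality argument in the direction transverse to the singularity $\{\xi_1 = \xi_2\}$.

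The first step is a scale decomposition $T_m = \sum_{k \in \mathbb{Z}} T_k$ where
\begin{eqnarray*}
T_k(f_1,f_2)(x) = \sum_{m \in \mathbb{Z}} \big(f_1 * \mathcal{F}^{-1}[\hat{\eta}_{P_1^{k,m}}]\big)(x) \cdot \big(f_2 * \mathcal{F}^{-1}[\hat{\eta}_{P_2^{k,m}}]\big)(x)
\end{eqnarray*}
and $P_i^{k,m}$ denotes the $i$-th component of the tile at scale $k$ and translation $m$. At each fixed scale, pointwise Cauchy--Schwarz in $m$ yields $|T_k(f_1,f_2)(x)| \leq S_k^{(1)}f_1(x) \cdot S_k^{(2)}f_2(x)$, where $S_k^{(i)}f_i = \big(\sum_m |f_i * \mathcal{F}^{-1}[\hat{\eta}_{P_i^{k,m}}]|^2\big)^{1/2}$ is a smooth square function over the equally spaced intervals $\{P_i^{k,m}\}_{m \in \mathbb{Z}}$ of common length $2^{-k}$. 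Translation plus dilation reduces $S_k^{(i)}$ to a fixed scale, so $\|S_k^{(i)} f_i\|_{p_i} \lesssim \|f_i\|_{p_i}$ holds uniformly in $k$ for $1 < p_i < \infty$ by standard Littlewood--Paley theory for equal-length intervals. H\"older's inequality then yields the uniform per-scale estimate $\|T_k(f_1, f_2)\|_r \lesssim \|f_1\|_{p_1} \|f_2\|_{p_2}$ with $1/r = 1/p_1 + 1/p_2$.

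To sum over $k$ I would extract orthogonality in the transverse direction $\xi_1 - \xi_2$. Applying Poisson summation in the period parameter $m$ produces the Fourier series expansion
\begin{eqnarray*}
m_k(\xi_1, \xi_2) = \sum_{\ell \in \mathbb{Z}} G_\ell\big(2^k(\xi_1 - \xi_2)\big) e^{2\pi i \ell \cdot 2^k \xi_1}, \qquad G_\ell(s) = \int \hat{\eta}(t) \hat{\eta}(t - s - \Gamma) e^{-2\pi i \ell t}\, dt,
\end{eqnarray*}
in which $G_\ell$ is smooth, supported in $s \in [-\Gamma - 1, -\Gamma + 1]$, and satisfies $|G_\ell(s)| \lesssim_N (1 + |\ell|)^{-N}$ for every $N \geq 0$. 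In particular $m_k$ is concentrated in the thin strip $\{|\xi_1 - \xi_2 + \Gamma 2^{-k}| \lesssim 2^{-k}\}$, so the symbols $m_k$ at distinct $k$ occupy lacunary strips parallel to the singular line $\xi_1 = \xi_2$. I would then invoke a bilinear Littlewood--Paley / Rubio de Francia-type argument in this transverse variable to dominate $\|\sum_k T_k(f_1, f_2)\|_r$ by the vector-valued norm $\big\| \big(\sum_k |T_k(f_1, f_2)|^2\big)^{1/2} \big\|_r$, and then combine with the vector-valued extension of the per-scale bound. The modulation factor $e^{2\pi i \ell \cdot 2^k \xi_1}$ contributes only a scale-dependent translation of $f_1$, which is absorbed into the wave-packet labeling, and the rapid decay of $G_\ell$ handles the summation in $\ell$.

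The main obstacle is executing the almost-orthogonality across scales: the $x$-frequency support of $T_k(f_1, f_2)$ covers all of $\mathbb{R}$ at every scale, so one cannot Littlewood--Paley on the output directly. The orthogonality must instead be extracted from the coupled bilinear structure in the transverse variable $\xi_1 - \xi_2$, where the scales are genuinely lacunary, and the scale-dependent modulations $e^{2\pi i \ell \cdot 2^k \xi_1}$ must be tracked carefully through the decomposition. The constraints $p_1, p_2 \geq 2$ and $\frac{1}{p_1} + \frac{1}{p_2} < 1$ emerge here, through Rubio de Francia's square function over disjoint intervals (which requires $p \geq 2$) together with the $r > 1$ condition needed for the bilinear vector-valued estimate. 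The equally-spaced translated structure of $\mathbb{P}$ is precisely the hypothesis that makes these arguments available, in contrast to the Grafakos--Kalton type obstruction recorded in Theorem \ref{P2}.
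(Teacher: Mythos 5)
There is a genuine gap, and it sits exactly at the cross-scale summation, which is the heart of the matter. Your per-scale step (Cauchy--Schwarz in the translation parameter, giving $|T_k(f_1,f_2)|\le S_k^{(1)}f_1\cdot S_k^{(2)}f_2$ with equal-length square functions) is fine --- though these are bounded only for $p_i\ge 2$, not $1<p_i<\infty$ as you first assert --- but it is too lossy to be recombined over scales: once the pairing between the two frequency indices is discarded, the quantity $\bigl(\sum_k (S_k^{(1)}f_1)^2(S_k^{(2)}f_2)^2\bigr)^{1/2}$ can only be estimated by putting a supremum in $k$ on one factor and a full square-sum in $k$ on the other, and $\sum_k (S_k^{(i)}f_i)^2$ diverges already for a single Schwartz function; so the ``vector-valued extension of the per-scale bound'' you invoke does not exist as stated. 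Likewise, the proposed domination of $\|\sum_k T_k\|_r$ by $\|(\sum_k|T_k|^2)^{1/2}\|_r$ via a ``bilinear Littlewood--Paley / Rubio de Francia argument in the transverse variable'' is not an available tool: disjointness of the strips in $\xi_1-\xi_2$ (or $\xi_1+\xi_2$) produces no orthogonality of the outputs by itself, and Theorem \ref{P2} shows that adaptation to the singular line --- i.e.\ precisely this transverse lacunarity --- yields no estimates whatsoever, so any such general principle would be false; it could only be rescued by using the periodic structure quantitatively, which is exactly what your outline leaves unproven.

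In fact the paper's geometry makes the step you declared impossible the easy one: the tiles are intended to sit along $\{\xi_1+\xi_2=0\}$ at distance $\Gamma 2^{-k}$ (compare the collection $\mathbb{Q}$ in Theorem \ref{P2}, where $\mathcal{F}(T^k_m(f_1,f_2))$ is supported in $[99\cdot 2^{-k},101\cdot 2^{-k}]$), so the scale-$k$ piece has output frequency support in a lacunary interval of length $\sim 2^{-k}$ near $\Gamma 2^{-k}$, and the standard output-side Littlewood--Paley equivalence gives $\|\sum_k T_k\|_r\simeq\|(\sum_k|T_k|^2)^{1/2}\|_r$ for $r=p_1p_2/(p_1+p_2)>1$; this is one place where $1/p_1+1/p_2<1$ enters. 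The substantial work is then the bound for the square function itself, and it is not obtained by per-scale Cauchy--Schwarz: the paper expands $f_1,f_2$ in Fourier series on unit intervals, computes the sum over the $2^k$ translates within each block explicitly to produce a Fej\'er kernel with quantitative decay, and uses Lemma \ref{ML} to isolate the entire $k$-dependence of the dominant and error terms into convolution against uniformly Mikhlin lacunary kernels $\psi_k$, so that the $k$-square sum becomes a genuine linear Littlewood--Paley square function of a single fixed expression; that expression is finally estimated using $p_1,p_2\ge 2$ (to pass from $\ell^2$ norms of local Fourier coefficients to local $L^{p_i}$ norms) and H\"older. If you wish to keep your outline, the missing ingredient you must supply is precisely this mechanism converting the coherence of the tiles across scales into bona fide orthogonality in $k$.
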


  \section{ $V_2$ Counterexample}
We first prove that a smooth variant of $V_2$ given by the maximal translation square function  corresponding to a fixed collection of  disjoint intervals $\mathcal{I}$ is unbounded. Recall
 \begin{reptheorem}{P1}
 There exists $\tilde{1} \in C^{\infty}([-1/2,1/2])$ and an almost disjoint collection of intervals $\{I\}=\mathcal{I}$ so that setting $\tilde{1}_I(x) =\check{\tilde{1}}(|I|^{-1}( x-c_I) ) $ for all intervals $I= [c_I - |I|/2, c_I + |I|/2] \subset \mathbb{R}$ ensures the operator
 
 \begin{eqnarray*}
\mathcal{V}_2 : f \mapsto \sup_{\tau \in \mathbb{R}} \left( \sum_{I \in \mathcal{I}} \left|f* \mathcal{F}^{-1} \left[ \tilde{1}_{I+\tau} \right] \right|^2 \right)^{1/2}
 \end{eqnarray*}
satisfies no $L^p$ estimates.

 \end{reptheorem}
 
 \begin{proof}
 Let $\mathcal{I} _k= \left\{ [k+\frac{l}{k}, k + \frac{l+1}{k}] : 0 \leq l <k \right\}$ and define $\mathcal{I} = \bigcup_{k \in \mathbb{N}} \mathcal{I}_k$.  Choose $\tilde{1} \in C^{\infty}([-1/2,1/2])$ to ensure $|\check{\tilde{1}}| \gtrsim 1_{[-2, 2]}$.  Fix $N \in \mathbb{N}$ together with $\phi \in \mathcal{S}(\mathbb{R})$ such that $1_{[-1/4, 1/4]} \leq \hat{\phi} \leq 1_{[-1/2, 1/2]}$ and construct 
 
 \begin{eqnarray*}
 f_N(x) := \sum_{1 \leq n \leq N} \phi(x-n) e^{2 \pi i nx}:= \sum_{1 \leq n \leq N} g_n(x)
 \end{eqnarray*}
in which case $supp~\hat{g}_n \subset [n-1/2, n+1/2]$. It clearly suffices to prove the claim $\mathcal{V}_2 (f_N)(x) \gtrsim \log^{1/2}(N) 1_{[1, N/2]}(x)$, for then $||\mathcal{V}_2(f_N) ||_p \simeq \log(N) N^{1/p}$, whereas $||f_N||_p \simeq N^{1/p}$. 

To this end, let $x \in [j,j+1]$ and set $\tau = j$. Fix $ k \in [10, N-j]$ and consider $I \in \mathcal{I}_{k}+j$ so that $I \subset [k+j-1/4, k+j +1/4]$. Then $g_{k+j} * \check{\tilde{1}}_{I}=\check{\tilde{1}}_I(x-k-j) =k^{-1}\check{\tilde{1}}(k^{-1} (x-k-j))e^{2 \pi i c_I (x-k-j)} $ . There are $O(k)$ many such intervals in $\mathcal{I}_k$ with this property. Therefore, the total contribution from all intervals $I \in \mathcal{I}_k$ at $x \in [j, j+1]$  is at least $\sum_{I \in \mathcal{I}_k: I \subset [k+j-1/4, k+j +1/4]} \left| g_{k+j}*\check{\tilde{1}}_I \right|^2(x) = \sum_{I \in \mathcal{I}_k: I \subset [k+j-1/4, k+j +1/4]} \frac{1}{k^2} \left| \check{\tilde{1}}_I(x-k-j) \right|^2(x)  \gtrsim \frac{1}{k}$. Summing over $10 \leq k \leq N-j$ yields 

\begin{eqnarray*}
\mathcal{V}^2_2(f_N)(x) \gtrsim \sum_{10 \leq k \leq N-j} ~\sum_{I \in \mathcal{I}_k: I \subset [k+j-1/4, k+j +1/4]} \left| g_{k+j}*\check{\tilde{1}}_I \right|^2(x) \gtrsim  \sum_{N-j \geq k \geq 10} \frac{1}{k} \gtrsim \log(N-j). 
\end{eqnarray*}
Choosing $j \in [1, N/2]$ ensures $\log(N-j) \gtrsim \log(N)$. 
\end{proof}

Replacing smooth frequency cutoffs $\left\{ \mathcal{F}^{-1}\left(\tilde{1}_I\right)\right\}_{I \in \mathcal{I}}$ with sharp ones $\{ \mathcal{F}^{-1} ( 1_I)\}_{I \in \mathcal{I}}$ is not a problem. Indeed, the tails of the sharp frequency projections decay even more slowly and one can shrink the size of intervals appearing in the collection $\mathcal{I}$ by a large enough constant factor $A$ to ensure $|\mathcal{F}^{-1} (1_{[-1/A, 1/A]})|\gtrsim_A 1_{[-2,2]}$.
  
  \section{Hilbert Transform Type Multiplier Counterexample}
 \begin{reptheorem}{P2}
 There exists a multiplier $m :\mathbb{R}^2 \rightarrow \mathbb{C}$ adapted to the singularity $\Gamma= \{ \xi_1 + \xi_2 = 0\}$ satisfying 
 
 \begin{eqnarray*}
 \left| \partial^{\vec{\alpha}} m(\vec{\xi})\right| \lesssim_{\vec{\alpha}} \frac{1}{|dist( \vec{\xi}, \Gamma)|^{|\vec{\alpha}|}}
 \end{eqnarray*}
 for all multi-indices $\vec{\alpha}$ such that $T_m: (f_1, f_2) \mapsto \int_{\mathbb{R}^2} m(\xi_1, \xi_2) \hat{f}_1(\xi_1) \hat{f}_2(\xi_2) e^{2 \pi i x (\xi_1 + \xi_2)} d \xi_1 d \xi_2$ (for all $\vec{f} \in \mathcal{S}(\mathbb{R})^2$) satisfies no $L^p$ estimates. 
 \end{reptheorem}
 
 \begin{proof}
Choose $\Phi \in C^\infty([-1/2,1/2])$ real and symmetric such that $\hat{\Phi}(x) >1_{[-1,1]}(x)$. Note that $\hat{\Phi}$ is automatically real and symmetric. 
Let $\Gamma >>1$.  Define a collection of  frequency squares
 
 \begin{eqnarray*}
 \mathbb{Q} &:=&  \bigcup_{k \geq 8 }~ \bigcup_{m \in \mathbb{Z}}~ \bigcup_{-2^{k-8} < \lambda < 2^{k-8}} \vec{Q}_{k,m, \lambda},
\end{eqnarray*}
where for each $ k  \geq 8, m \in \mathbb{Z}, -2^{k-8} < \lambda < 2^{k-8}$

\begin{eqnarray*}
\vec{Q}_{k, m \lambda}  := [m+ \lambda 2^{-k}-2^{-k-1}, m + \lambda 2^{-k}+2^{-k-1} ] \times [ -m - \lambda 2^{-k}-2^{-k-1} + \Gamma 2^{-k}, -m - \lambda 2^{-k} +2^{-k-1} + \Gamma 2^{-k} ] .
\end{eqnarray*}
Next, assign
 \begin{eqnarray*}
 \eta^1_{\vec{Q}_{k,m, \lambda}}(x) &=& 2^{-k} \hat{\Phi}(x2^{-k}) e^{2 \pi i (m + \lambda 2^{-k}) x} \\ 
 \eta^2_{\vec{Q}_{k,m, \lambda}}(x)&=& 2^{-k} \hat{\Phi}(x2^{-k}) e^{-2 \pi i (m + \lambda 2^{-k}- \Gamma 2^{-k}) x} e^{ 2 \pi i \Gamma 2^{-k} m}.
 \end{eqnarray*}
 Let $m(\xi_1 , \xi_2) = \sum_{k\geq 8} \sum_{|\vec{Q}|=2^{-k}} \hat{ \eta}^1_{Q_1} (\xi_1) \hat{\eta}^2_{Q_2}(\xi_2) \in \mathcal{M}_{\{ \xi_1 +\xi_2 =0\}}(\mathbb{R}^2)$. 
 Moreover, letting $\epsilon =1/100$, choose $\phi \in \mathcal{S}(\mathbb{R})$ satisfying 
 
 \begin{eqnarray*}
  1_{[-1/2+\epsilon, 1/2-\epsilon]} \leq \hat{\phi}  \leq1_{[-1/2, 1/2]}.
 \end{eqnarray*}
 In addition, for each $N \in \mathbb{N}$, construct $f_1^N (x) = \sum_{1 \leq n \leq N} \phi(x-n) e^{2 \pi i n x}$ and $f_2^N(x) = \sum_{1 \leq n \leq N} \phi(x-n) e^{-2 \pi i n x}$.  For a given $(c_{Q_1}, c_{Q_2}) = (m + \lambda 2^{-k}, -m - \lambda 2^{-k} + \Gamma 2^{-k})$ for which $[m+\lambda 2^{-k} -2^{-k-1}, m+\lambda 2^{-k}  + 2^{-k-1}] \cap  [n_1-1/2, n_1+1/2] \not = \emptyset  $ and $ [ m+ \lambda 2^{-k} - \Gamma 2^{-k}-2^{-k-1} , m+ \lambda 2^{-k} - \Gamma 2^{-k} + 2^{-k-1}] \cap [n_2-1/2, n_2+ 1/2] \not = \emptyset$, then $m= n_1=n_2$ for all $k \geq C_\Gamma$, in which case
 
 \begin{eqnarray*}
 &  [m+\lambda 2^{-k} -2^{-k-1}, m+\lambda 2^{-k}  + 2^{-k-1}] &\subset [m-1/2+\epsilon, m+1/2-\epsilon] \\ &  [m+\lambda 2^{-k} \Gamma 2^{-k} -2^{-k-1} , m+ \lambda 2^{-k} - \Gamma 2^{-k} + 2^{-k-1}]  &\subset [m-1/2+\epsilon, m+1/2-\epsilon].
 \end{eqnarray*}
Therefore, for each $k \geq C_\Gamma$, we have
 
 \begin{eqnarray*}
 && T^k_m(f_1, f_2)(x)\\&=& \sum_{m \in \mathbb{Z}} \sum_{-2^{k-8} < \lambda < 2^{k-8}}  \sum_{1\leq n_1, n_2 \leq N}  \left (\phi(\cdot -n_1) e^{2 \pi i n_1 \cdot}\right) * \eta^1_{\vec{Q}_{k,m, \lambda}} (x) \left(\phi(\cdot-n_2) e^{-2 \pi i n_2\cdot}\right) * \eta^2_{\vec{Q}_{k, m , \lambda}}(x) \\ &=&  \sum_{1 \leq m \leq N} \sum_{-2^{k-8} < \lambda < 2^{k-8}}   \left (\phi(\cdot -m) e^{2 \pi i m \cdot}\right) * \eta^1_{\vec{Q}_{k,m, \lambda}} (x) \left(\phi(\cdot-m) e^{-2 \pi i m\cdot}\right) * \eta^2_{\vec{Q}_{k, m , \lambda}}(x) \\ &=&  \sum_{1 \leq m \leq N} \sum_{-2^{k-8} < \lambda < 2^{k-8}}  2^{-2k} \hat{\Phi}((x-n)2^k) e^{2 \pi i(m+\lambda 2^{-k})  (x-m)} \hat{\Phi}((x-m)2^k) e^{-2 \pi i (m+\lambda 2^{-k} - \Gamma 2^{-k} ) (x-m)} e^{2 \pi i \Gamma 2^{-k} m} \\ &=& [ 2^{k-7}-1] 2^{-2k} \sum_{1 \leq m \leq N} ( \hat{\Phi}((x-m)2^k)))^2 e^{2 \pi i \Gamma 2^{-k} x}. 
 \end{eqnarray*}
By the assumption $\hat{\Phi}$ is real-valued with $\hat{\Phi}(x) >1_{[-1,1]}(x)$, $|T^k_m(f_1, f_2)(x)| \gtrsim 1_{[1,N]}(x)$ for all $ C_\Gamma \leq k \lesssim \log(N)$.  Lastly, by picking $\Gamma =100$, say, 

\begin{eqnarray*}
supp~ \mathcal{F}(T_m^k(f_1^N, f_2^N)) \subset [99\cdot 2^{-k}, 101\cdot 2^{-k}].
\end{eqnarray*}
Letting $1< p_1, p_2 <\infty$ satisfy $\frac{1}{p_1} + \frac{1}{p_2} <1$, note by Littlewood-Paley equivalence
 
 \begin{eqnarray*}
 \left| \left| T_{m} (f^N_1, f^N_2)  \right| \right|_{\frac{p_1 p_2}{p_1 + p_2}} &=&   \left| \left| \sum_{k \geq 8}T^k_m(f_1^N, f_2^N) \right| \right|_{\frac{p_1 p_2}{p_1 + p_2}}  \\ &\simeq&   \left| \left| \left( \sum_{k \geq 8} \left| T_m^k(f_1^N, f_2^N)\right|^2 \right)^{1/2} \right| \right|_{\frac{p_1 p_2}{p_1 + p_2}} \\ &\gtrsim& \log(N)^{1/2} N^{\frac{1}{p_1}+\frac{1}{p_2}}.
 \end{eqnarray*}
 However, $||f_i||_{p_i} \simeq N^{1/p_i}$ for $i \in \{1,2\}$, so taking N arbitrarily large establishes the claim.

 \end{proof}

 \section{Estimates for $\mathcal{V}_2^{res}$}
We first establish 
 \begin{prop}
The bi-sublinear operator
\begin{eqnarray*}
\mathcal{B} : (f_1, f_2) \mapsto \sup_{ k \in \mathbb{Z}} \left| \sum_{| \vec{P} | = 2^k}  f_1*\eta^1_{P_1} f_2* \eta^2_{-P_1} \right| 
\end{eqnarray*}
maps $L^{p_1}(\mathbb{R}) \times L^{p_2} (\mathbb{R}) \rightarrow L^{p_1 p_2/(p_1 + p_2)} (\mathbb{R})$ provided $2 < p_1, p_2 <\infty$. 
\end{prop}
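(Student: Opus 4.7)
The plan is to reduce $\mathcal{B}$ to a product of scale-maximal smooth square functions by Cauchy-Schwarz, invoke H\"older, and handle the remaining linear estimate with a time-frequency model. For each fixed scale $k\in\mathbb{Z}$ I apply Cauchy-Schwarz to the inner sum over $\vec{P}$:
$$\left|\sum_{|\vec{P}|=2^k} f_1 * \eta^1_{P_1}\, f_2 * \eta^2_{-P_1}\right| \leq \left(\sum_{|P|=2^k}\left|f_1*\eta^1_P\right|^2\right)^{1/2}\left(\sum_{|P|=2^k}\left|f_2*\eta^2_{-P}\right|^2\right)^{1/2}.$$
Because both factors are non-negative, the supremum over $k$ of the product is dominated by the product of the suprema; defining
$$S^\star_i(f)(x):=\sup_{k\in\mathbb{Z}}\left(\sum_{|P|=2^k}\left|f*\eta^i_P(x)\right|^2\right)^{1/2},$$
we obtain $\mathcal{B}(f_1,f_2)(x)\leq S^\star_1(f_1)(x)\,S^\star_2(f_2)(x)$ pointwise. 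H\"older's inequality with $1/r=1/p_1+1/p_2$ then reduces the Proposition to the linear estimate $\|S^\star_i(f)\|_{p_i}\lesssim \|f\|_{p_i}$ for $2<p_i<\infty$ and $i\in\{1,2\}$.

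The scale-maximal square function estimate is the main technical point. At each fixed $k$, the smooth Rubio de Francia-type operator $R_k(f)=(\sum_{|P|=2^k}|f*\eta^i_P|^2)^{1/2}$ is bounded uniformly on $L^p$ for $p\in(1,\infty)$ by smooth Littlewood-Paley / Plancherel arguments; the substantive issue is controlling the supremum over $k\in\mathbb{Z}$. My approach is to linearize the supremum by a measurable scale selector $k=k(x)$, dualize to the bilinear form $\int (S^\star_i f)^2\,g$ with $g\in L^{(p_i/2)'}$, and analyze the resulting form via a Lacey-Thiele-style time-frequency model. The linearization naturally produces a sum over multi-tiles whose time scales equal $2^{-k(x)}$; organizing these into trees, controlling $\mathrm{BMO}$-type sizes and $L^2$-energies, and invoking the standard tree-selection algorithm yields the desired bound, with $p_i>2$ providing the room needed to close the sizes.

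The main obstacle is this scale-maximal step: at $p=2$ the analogous operator fails (compare to the $V_2$ counterexample in Theorem \ref{P1}), so the argument must exploit the gap between $p=2$ and $p=p_i>2$, via an $\epsilon$ of extra room in the size/energy tradeoff at each tree. Once the linear bound on $S^\star_i$ is in hand, the Cauchy-Schwarz--H\"older reduction immediately produces the bi-sublinear estimate stated in the Proposition.
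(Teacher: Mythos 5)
Your opening reduction is fine: Cauchy--Schwarz over the tiles at a fixed scale, the trivial bound of the sup of a product by the product of sups, and H\"older with $\frac{1}{p_1}+\frac{1}{p_2}<1$ do correctly reduce the Proposition to the linear bound $\|S^\star f\|_p \lesssim \|f\|_p$ for $2<p<\infty$, and nothing essential is lost since the square functions forget the anti-diagonal alignment of $P_1$ and $-P_1$. But observe that this linear bound is in substance the diagonal case of the very Proposition you are proving (choose $f_2$ so that $f_2*\eta^2_{-P_1}=\overline{f_1*\eta^1_{P_1}}$, whence $\mathcal{B}(f_1,f_2)=(S^\star f_1)^2$); it is also essentially the paper's Corollary on $\mathcal{V}^{res}_2$, which the paper \emph{deduces from} this Proposition. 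So your reduction defers all of the actual content to the final step.

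That final step is where the gap lies: ``linearize the scale selector, dualize to $\int (S^\star f)^2 g$, run a Lacey--Thiele tree/size/energy argument, with $p>2$ giving room'' is an appeal to machinery, not a proof. You state no size or energy bounds, do not explain how the measurable selector $k(x)$ interacts with tree selection, and do not identify where $p>2$ enters quantitatively. Moreover trees are not the natural structure here, since the linearized parameter is only a scale choice $N(x)=k$, not a frequency modulation. What is actually needed (and what the paper does, directly for the trilinear form) is more elementary: pass to the model $\sum_{\vec P}|I_P|^{-1/2}\langle f_1,\Phi^1_{P_1}\rangle\langle f_2,\Phi^2_{-P_1}\rangle\langle f_3 1_{\{N(x)=k\}},\tilde{1}_{I_P}\rangle$, remove an exceptional set built from $M1_{E_1}$ and $M1_{E_2}$, apply Cauchy--Schwarz/Bessel over all tiles sharing a fixed spatial interval at a fixed scale to gain the factors $2^{-n_1/2}2^{-n_2/2}$, gain $2^{-Cd}$ from the spatial distance to the exceptional set, and then sum in $n_1,n_2,d$ and interpolate the resulting restricted weak-type bounds (this is where $2<p_i$ is used, via the choice $\alpha_i=2/p_i$). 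Until you supply the analogous quantitative argument for your dualized form $\int (S^\star f)^2 g$, the central estimate of your proposal is asserted rather than proven.
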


\begin{proof}
By routine arguments, $\mathcal{B}$ can be linearized, dualized, and discretized to form the model 

\begin{eqnarray*}
\sum_{\vec{P} \in \mathbb{P}}\frac{1}{|I_P|^{1/2}} \langle f_1, \Phi^1_{P_1} \rangle \langle f_2, \Phi^2_{-P_1} \rangle \langle f_3 1_{\{x|N(x) = k\}} , \tilde{1}_{I_P} \rangle
\end{eqnarray*}
where for each dyadic tile $\vec{P} =(I_P, \omega_P) \in \mathbb{P}$,  $\Phi^1_{\vec{P}}$ and $\Phi^2_{\vec{P}}$ have Fourier support in $\omega_P$ and are rapidly decaying away from $I_P$. 
By scaling invariance and interpolation, we only need to establish the corresponding restricted weak type estimates under the assumption $|f_i| \leq 1_{E_i}$ and $|E_3| = 1$. For some $0 \leq \alpha_1, \alpha_2 \leq 1$ to be determined, let 

\begin{eqnarray*}
\Omega_{\alpha_1, \alpha_2} = \left\{ M 1_{E_1} \geq C |E_1|^{\alpha_1} \right\} \bigcup \left\{ M 1_{E_2} \geq C |E_2|^{\alpha_2} \right\}
\end{eqnarray*}
 with $C$ large enough to ensure $|\Omega_{\alpha_1, \alpha_2}| \leq 1/2$ and $\tilde{E}_3 := E_3 \cap \Omega_{\alpha_1, \alpha_2}^c$ is a major subset of $E_3$. For $d \geq 0$, define

\begin{eqnarray*}
\mathbb{P}_d = \left\{ \vec{P} \in \mathbb{P} : 1 + \frac{dist(I_P, \Omega_{\alpha_1, \alpha_2}^c)}{ I_P} \simeq 2^d \right\}. 
\end{eqnarray*}
Let $\mathbb{I}^{1, n_1}$ be the collection of dyadic intervals maximal with respect to the property $I \subset \left\{ M 1_{E_1} \geq 2^{-n_1} \right\}$ and set $\mathbb{P}_d^{1, n_1}= \mathbb{P}_d \cap \left\{ I_P \subset \bigcup_{I \in \mathbb{I}^{1, n_1}} I ~ for~ which~ I_P \not \subset \mathbb{I}^{1, m}~for~any~m < n_1 \right\}$. Similarly define $\mathbb{P}_d^{2, n_2}$ by substituting $E_2$ for $E_1$. Lastly, define $\bar{\mathbb{P}}_d^{n_1, n_2} = \mathbb{P}_d ^{1, n_1} \cap \mathbb{P}_d^{2, n_2}$. 

By construction, 

\begin{eqnarray*}
&&\sum_{\vec{P} \in \bar{\mathbb{P}}^{n_1, n_2}_d} \frac{1}{|I_P|^{1/2}} \langle f_1, \Phi_{P_1} \rangle \langle f_2, \Phi_{-P_1} \rangle \langle f_3 1_{\tilde{E}_3} 1_{\{x|N(x) = k\}} , \tilde{1}_{I_P}   \rangle  \\ &=&  \sum_{k \in \mathbb{Z}}~~ \sum_{\vec{P} \in \mathbb{P}^{n_1, n_2}_d : |I_P|=2^k} \frac{1}{|I_P|} \langle f_1, \Phi_{P_1} \rangle \langle f_2, \Phi_{-P_1} \rangle \langle f_3 1_{\tilde{E}_3}1_{\{x|N(x) = k\}} ,\tilde{1}_{I_P}\rangle \\&\lesssim &\sum_{k \in \mathbb{Z}} \sum_{|I|=2^k} \left( \sum_{\vec{P}\in \mathbb{P}^{n_1, n_2}_d : I_P = I} \frac{ |\langle f_1, \Phi_{P_1} \rangle|^2 }{|I_P|}\right)^{1/2} \left( \sum_{\vec{P}\in\mathbb{P}^{n_1, n_2}_d : I_P = I} \frac{ |\langle f_2, \Phi_{-P_1} \rangle|^2 }{|I_P|}\right)^{1/2} \left| \langle f_31_{\tilde{E}_3} 1_{\{x|N(x) = k\}} , \tilde{1}_{I}\rangle \right| \\ &\lesssim& \sum_{k \in \mathbb{Z}} \sum_{|I| = 2^k } 2^{-n_1/2} 2^{-n_2/2}  \left| \langle f_3 1_{\tilde{E}_3}1_{\{x|N(x) = k\}}, \tilde{1}_{I_P}\rangle \right| \\&\lesssim& 2^{-n_1/2} 2^{-n_2/2} 2^{-C d}. 
\end{eqnarray*}
Therefore, summing over all $n_1, n_2$ such that $2^{-n_1} \lesssim 2^d |E_1|^{\alpha_1}$ and $2^{-n_2} \lesssim 2^d |E_2|^{\alpha_2}$ yields $\left| \Lambda(f_1, f_2, f_31_{\tilde{E}_3})\right| \lesssim \sum_{ d \geq 0} \sum_{ n_1, n_2} |\Lambda_d^{n_1, n_2} (f_1, f_2, f_3 1_{\tilde{E}_3}) | \lesssim  |E_1|^{\alpha_1/2} |E_2|^{\alpha_2/2}$. Setting $\alpha_1 = 2 / p_1, \alpha_2 = 2/p_2$ gives us restricted weak type estimates of the form $(1/p_1, 1/p_2, 1-1/p_1- 1/p_2)$ for all $2 \leq p_1, p_2 \leq \infty$. 

\end{proof}
 \begin{corollary}
The restricted variation 
\begin{eqnarray*}
\mathcal{V}^{res}_2 : f \mapsto \sup_{R \in \mathbb{R}_+} ~~\sup_{0 \leq \alpha  < R} ~~\left(\sum_{j \in \mathbb{Z}}  \left|f*\mathcal{F}^{-1} \left[ \tilde{1}_{[\alpha + j R, \alpha + (j+1)R]}\right] \right|^2 \right)^{1/2} 
\end{eqnarray*}

maps $L^p(\mathbb{R}) \rightarrow L^p(\mathbb{R})$ for all $2 < p <\infty$. 
 \end{corollary}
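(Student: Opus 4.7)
The plan is to deduce the corollary from the preceding proposition by squaring $\mathcal{V}_2^{res}$ and recognizing the result as a specialization of $\mathcal{B}$ applied to the pair $(f, \bar{f})$. Arrange that $\tilde{1} \in C^{\infty}([-1/2,1/2])$ is real and even, so that for every interval $I$ one has $\overline{\mathcal{F}^{-1}[\tilde{1}_I]}(x) = \mathcal{F}^{-1}[\tilde{1}_{-I}](x)$. This yields the pointwise identity
\begin{eqnarray*}
\left| f*\mathcal{F}^{-1}[\tilde{1}_I] \right|^2(x) = \left(f*\mathcal{F}^{-1}[\tilde{1}_I]\right)(x) \cdot \left(\bar{f}*\mathcal{F}^{-1}[\tilde{1}_{-I}]\right)(x),
\end{eqnarray*}
and applying it with $I = [\alpha+jR, \alpha+(j+1)R]$ and summing in $j$ rewrites the quantity inside the outer supremum defining $(\mathcal{V}_2^{res}(f))^2(x)$ as a non-negative sum whose summands match the bilinear integrand appearing in $\mathcal{B}(f,\bar{f})$, upon the identification $\eta^1_{P_1} = \mathcal{F}^{-1}[\tilde{1}_{P_1}]$ and $\eta^2_{-P_1} = \mathcal{F}^{-1}[\tilde{1}_{-P_1}]$.

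Next I would reduce the continuous supremum over $R \in \mathbb{R}_+$ to one over dyadic scales $R = 2^k$ at the cost of only a fixed multiplicative constant: for any $R \in [2^k, 2^{k+1})$ and any shift $\alpha \in [0,R)$, the tiling $\{[\alpha+jR, \alpha+(j+1)R]\}_{j \in \mathbb{Z}}$ is a translate of a partition of $\mathbb{R}$ into intervals of length comparable to $2^k$; by rescaling the bump $\tilde{1}$ (whose Schwartz seminorms control implicit constants), each such tiling fits within the family of equally spaced tilings at scale $2^k$ already ranged over by the inner sum in $\mathcal{B}$. Since each individual value of the non-negative quantity $\sum_j |f*\mathcal{F}^{-1}[\tilde{1}_{I_j^{R,\alpha}}]|^2(x)$ equals the corresponding partial sum in $\mathcal{B}(f,\bar{f})$, this produces the pointwise bound
\begin{eqnarray*}
(\mathcal{V}_2^{res}(f))^2(x) \lesssim \mathcal{B}(f,\bar{f})(x).
\end{eqnarray*}

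Finally, I would invoke the proposition with $p_1 = p_2 = p$, which is admissible precisely when $p > 2$ and gives $p_1 p_2 /(p_1+p_2) = p/2$, to conclude
\begin{eqnarray*}
||\mathcal{V}_2^{res}(f)||_p^2 = || (\mathcal{V}_2^{res}(f))^2 ||_{p/2} \lesssim ||\mathcal{B}(f,\bar{f})||_{p/2} \lesssim ||f||_p \, ||\bar{f}||_p = ||f||_p^2.
\end{eqnarray*}
The main point to scrutinize is the passage from the two-parameter supremum $(R,\alpha)$ of $\mathcal{V}_2^{res}$ to the scale-only supremum in $\mathcal{B}$: one must verify that the indexing $|\vec{P}| = 2^k$ there ranges over all equally spaced tilings of $\mathbb{R}$ at scale $2^k$ (not a single privileged alignment), and that the reduction from continuous $R$ to dyadic $R$ is absorbed by standard bump rescaling. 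Assuming these points, the proof is otherwise a direct application of the proposition.
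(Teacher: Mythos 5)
Your first step is fine and is the intended route: writing $\left|f*\mathcal{F}^{-1}\left[\tilde{1}_I\right]\right|^2 = \left(f*\mathcal{F}^{-1}\left[\tilde{1}_I\right]\right)\left(\bar{f}*\mathcal{F}^{-1}\left[\eta^2_{-I}\right]\right)$ (you do not even need $\tilde{1}$ real and even, since the Proposition allows two different profiles $\eta^1,\eta^2$; just take $\eta^2$ to be the conjugate reflection of $\tilde{1}$), and then invoking the Proposition with $p_1=p_2=p>2$ and target exponent $p/2$ is exactly how the Corollary is meant to follow.

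The genuine gap is at the point you flag and then pass over. In the Proposition, the inner sum $\sum_{|\vec{P}|=2^k}$ at a fixed scale runs over one grid of dyadic tiles; it does not range over all equally spaced tilings of comparable length, and there is no supremum over shifts $\alpha$ or over non-dyadic lengths $R$. Consequently the Corollary is not a direct specialization: for $R\in(2^k,2^{k+1})$ and $\alpha\in[0,R)$, the intervals $[\alpha+jR,\alpha+(j+1)R]$ sit at offsets from the dyadic grid that vary with $j$ (they drift), so no single rescaling or translation of the bump $\tilde{1}$ converts this tiling into the family summed over in $\mathcal{B}$, and your claimed pointwise bound $(\mathcal{V}_2^{res}f)^2(x)\lesssim \mathcal{B}(f,\bar{f})(x)$ does not follow from what you wrote. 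The mechanism the paper uses is different: expand each bump $\tilde{1}_{[\alpha+jR,\alpha+(j+1)R]}$ as a Fourier series on a dyadic interval of comparable length containing its support, so that the shifted non-dyadic frequency cutoff becomes a rapidly convergent sum of modulated bumps adapted to dyadic intervals (equivalently, translated wave packets), and then observe that the tile model in the Proposition's proof --- wave packets with Fourier support in $\omega_P$ and rapid spatial decay away from $I_P$ --- absorbs these translations and modulations uniformly, the rapidly decaying coefficients being summed by the triangle inequality. Without carrying out this expansion (or proving a strengthened Proposition in which the scale-$2^k$ sum is replaced by arbitrary shifted grids and arbitrary $R\sim 2^k$, which would require revisiting the linearization and discretization), your reduction from the two-parameter supremum $(R,\alpha)$ to $\mathcal{B}$ is incomplete.
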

 \begin{proof}
 Expand bumps functions on shifted non-dyadic intervals as Fourier series on dyadic intervals. 
 \end{proof}

For completeness, we include the following negative result:
\begin{prop}
The operator $\mathcal{V}_2^{res}$ does not map $L^2 (\mathbb{R}) \rightarrow L^2(\mathbb{R})$. 
\end{prop}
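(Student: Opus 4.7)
The plan is to exhibit a single Schwartz test function $f$ for which $\|f\|_2 < \infty$ but $\mathcal{V}_2^{res}(f) \notin L^2(\mathbb{R})$, by exploiting the slow decay of the reproducing kernel $R\,\check{\tilde 1}(R\cdot)$ as $R \to 0$.

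Specifically, I would take $f \in \mathcal{S}(\mathbb{R})$ defined by $\hat f = \tilde 1_{[0,1]}$, a smooth frequency bump supported in $[0,1]$ with $\hat f \equiv 1$ on a sub-interval $[\epsilon, 1-\epsilon]$ for some small $\epsilon > 0$. Then $\|f\|_2 = \|\tilde 1\|_2 < \infty$. For any scale $R \in (0, 1)$ (taking $\alpha = 0$), each interval $I_j = [jR, (j+1)R]$ well contained in $[\epsilon, 1-\epsilon]$ satisfies $\hat f \cdot \tilde 1_{I_j} = \tilde 1_{I_j}$, so a direct Fourier computation gives
\[
f * \mathcal{F}^{-1}[\tilde 1_{I_j}](x) = R\, \check{\tilde 1}(Rx)\, e^{2\pi i c_{I_j} x},
\]
whose modulus $R|\check{\tilde 1}(Rx)|$ is independent of $j$.

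Summing the $\simeq 1/R$ such interior intervals and invoking the hypothesis $|\check{\tilde 1}| \gtrsim 1_{[-2,2]}$ (as in Theorem~\ref{P1}) yields
\[
\sum_{j} \bigl|f * \mathcal{F}^{-1}[\tilde 1_{I_j}](x)\bigr|^2 \gtrsim R \cdot 1_{|x| \le 2/R}.
\]
Taking the supremum over admissible scales, for each $|x| \ge 2$ the choice $R = 2/|x| \le 1$ is valid and gives $\mathcal{V}_2^{res}(f)(x)^2 \gtrsim |x|^{-1}$. Since $\int_{|x|\ge 2} |x|^{-1}\, dx = \infty$, one concludes $\mathcal{V}_2^{res}(f) \notin L^2(\mathbb{R})$, ruling out any $L^2 \to L^2$ bound.

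The only technical point is the $O(1)$ many boundary intervals $I_j$ near $\xi=0$ and $\xi=1$, where $\hat f$ is varying; these contribute only a bounded additive term and do not affect the $|x|^{-1}$ lower bound at infinity. A stylistically cleaner variant would take the sharp indicator $\hat f = 1_{[0,1]}$, giving $f(x) = (e^{2\pi i x}-1)/(2\pi i x)$ with its explicit $1/x$ tail, but then an extra approximation step is required to keep $f$ within $\mathcal{S}(\mathbb{R})$.
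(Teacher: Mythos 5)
Your proposal is correct and follows essentially the same route as the paper's proof: the paper also tests against a single Schwartz function whose Fourier transform is a plateau bump, observes that at scale $R \simeq 1/|x|$ the $\simeq 1/R$ intervals under the plateau each contribute $\simeq R^2 |\check{\tilde{1}}(Rx)|^2$, and concludes $\mathcal{V}_2^{res}(\phi)(x) \gtrsim (1+|x|^{1/2})^{-1} \notin L^2(\mathbb{R})$. The only cosmetic differences are that the paper works with dyadic scales $R = 2^{-l}$ rather than your choice $R = 2/|x|$, and it leaves implicit the non-degeneracy of $\check{\tilde{1}}$ that you state explicitly as $|\check{\tilde{1}}| \gtrsim 1_{[-2,2]}$.
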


\begin{proof}
Fix $\phi \in \mathcal{S}(\mathbb{R})$ such that $1_{[-1, 1]} \leq \hat{\phi} \leq 1_{[-2,2]}$. Fix $|x| \simeq 2^{l}$ for some $l >0$.  Then $\sum_{|Q| = 2^{l}} |\phi*\eta_Q(x)|^2 \simeq 2^{-l} $. Hence, 

\begin{eqnarray*}
\sup_{l \in \mathbb{Z}} \left( \sum_{|Q| = 2^{-l}} |\phi * \eta_{Q}(x)|^2 \right)^{1/2} \gtrsim \frac{ 1}{1+|x|^{1/2}}.
\end{eqnarray*}
Clearly, $\phi \in L^2(\mathbb{R})$ and $\frac{1}{1+|x|^{1/2}} \not \in L^2(\mathbb{R})$. 
\end{proof}

\section{Counterexamples for a Maximal Adjoint}

\begin{definition}
For $\tau \in \mathbb{Z}$ and $\sigma \in \mathbb{R}_+$ let $I_\tau ^{\sigma} := [ \tau \sigma  - \sigma/2 , \tau \sigma + \sigma/2].$
\end{definition}

 \begin{prop}
Let $\hat{\eta} \in C^\infty ([-1/2,1/2])$ satisfy $\eta \gtrsim 1_{[-1,1]}$. Then 
\begin{eqnarray*}
\mathcal{M}_\eta: (f, g) \mapsto \sup_{k \in \mathbb{R}_+}\left|  \sum_{\tau\in \mathbb{Z} } f*\mathcal{F}^{-1} \left[ \hat{\eta}_{I_\tau^{k}} \right]  g*\mathcal{F}^{-1} \left[ \eta_{I_0^{k}}\right] \right|
\end{eqnarray*}
maps $L^{p_1}(\mathbb{R}) \times L^{p_2}(\mathbb{R}) \rightarrow L^{p_1 p_2 / (p_1 + p_2)}(\mathbb{R})$ iff $p_1=\infty$ and $1 < p_2 \leq \infty$.  

\end{prop}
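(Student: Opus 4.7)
My plan is to collapse the operator at each fixed scale using Poisson summation, then deduce the positive and negative directions from the resulting explicit pointwise formula. At scale $k$, pull out the $\tau$-independent factor to write
$$\Bigl(\sum_\tau P_{\tau,k}f\Bigr)(x)\cdot(g*\phi_k)(x),\qquad \phi_k(y):=k\eta(ky),$$
where $P_{\tau,k}h := h*\mathcal{F}^{-1}[\hat\eta_{I_\tau^k}]$. The symbol $\sum_\tau\hat\eta((\xi-\tau k)/k)$ is $k$-periodic in $\xi$, so expanding its Fourier series (equivalently, Poisson summation) yields
$$\sum_\tau P_{\tau,k}f(x)=\sum_{j\in\mathbb{Z}}\eta(j)\,f(x-j/k),\qquad \mathcal{M}_\eta(f,g)(x)=\sup_{k>0}\Bigl|\sum_{j\in\mathbb{Z}}\eta(j)f(x-j/k)\Bigr|\cdot|g*\phi_k(x)|.$$

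The positive direction ($p_1=\infty$, $1<p_2\le\infty$) is now immediate. Since $\eta\in\mathcal{S}(\mathbb{R})$, the discrete sum is pointwise bounded by $\|\eta\|_{\ell^1(\mathbb{Z})}\|f\|_\infty$; and since $\phi_k$ is a Schwartz kernel of scale $1/k$, standard estimates give $\sup_k|g*\phi_k|\lesssim Mg$, the Hardy--Littlewood maximal function. Thus $\mathcal{M}_\eta(f,g)\lesssim\|f\|_\infty Mg$, and $L^{p_2}$-boundedness of $M$ for $p_2>1$ (trivial for $p_2=\infty$) closes the estimate.

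For the necessity of $p_1=\infty$, I argue by concentration. Pick $\varphi\in C_c^\infty([-1,1])$ with $\varphi(0)=1$ and set $f_\delta(x):=\delta^{-1/p_1}\varphi(x/\delta)$, so $\|f_\delta\|_{p_1}\sim 1$; let $g$ be a fixed non-negative Schwartz bump with $g\ge 1$ on $[0,2]$. For each $x\in[2\delta,1]$ I freeze the scale at $k=1/x$: the shifts $x-j/k=(1-j)x$ force every term with $j\ne 1$ to vanish by the compact support of $\varphi$, while $j=1$ contributes exactly $\eta(1)\delta^{-1/p_1}\gtrsim\delta^{-1/p_1}$ via $\eta\gtrsim 1_{[-1,1]}$. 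The same lower bound on $\eta$ also gives $|g*\phi_k(x)|\gtrsim 1$ at this scale. Hence $\mathcal{M}_\eta(f_\delta,g)(x)\gtrsim\delta^{-1/p_1}$ on $[2\delta,1]$, so $\|\mathcal{M}_\eta(f_\delta,g)\|_q\gtrsim\delta^{-1/p_1}\to\infty$ while $\|f_\delta\|_{p_1}\|g\|_{p_2}\sim 1$, ruling out every $p_1<\infty$. For the remaining endpoint $(p_1,p_2)=(\infty,1)$, I take $f$ to be a Schwartz bump $\phi_R$ equal to $1$ on $[-R,R]$: on $[-R/2,R/2]$ the $j$-sum is essentially the constant $\sum_j\eta(j)=\hat\eta(0)$ (generically nonzero; otherwise one replaces $\phi_R$ by a modulated bump $e^{2\pi i\xi_* x}\phi_R$ at a frequency $\xi_*\in(-1/2,1/2)$ where $\hat\eta(\xi_*)\ne 0$ and argues at a single scale). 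Thus $\mathcal{M}_\eta(\phi_R,g)\gtrsim Mg$ on $[-R/2,R/2]$, and taking $g$ a Schwartz approximant of $1_{[0,1]}$ yields $\|Mg\,1_{[-R/2,R/2]}\|_1\gtrsim\log R$ against $\|\phi_R\|_\infty\|g\|_1\sim 1$, by the classical failure of Hardy--Littlewood on $L^1$.

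The principal obstacle is the first counterexample: ensuring that on a set of positive measure the $j$-sum $\sum_j\eta(j)f_\delta((1-j)x)$ reduces cleanly to the single dominant term $j=1$, with no destructive cancellation from other integer shifts. The compact support of $\varphi$ combined with the adaptive scale $k=1/x$ pushes every $(1-j)x$ for $j\ne 1$ outside the concentration region $[-\delta,\delta]$ while preserving the $j=1$ contribution; with a non-compactly supported Schwartz $\varphi$ the argument would instead require a quantitative tail estimate plus a careful sign analysis to rule out the cancellation.
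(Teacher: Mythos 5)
Your argument follows essentially the same route as the paper's: the collapse of the $\tau$-sum at scale $k$ into $\sum_j \eta(j) f(x-j/k)$ via Fourier series on $[-1/2,1/2]$ (Poisson summation) is exactly the paper's first step, the positive direction is the same pointwise bound $\mathcal{M}_\eta(f,g)\lesssim \|f\|_\infty Mg$, and the counterexample for $p_1<\infty$ uses the same mechanism: a bump concentrated at scale $\delta\sim 1/k_0$ against a fixed bump $g$, exploiting the continuum of scales to choose $k\approx 1/x$ at each point so that a single translate of the concentrated bump sits at $x$. Two differences are worth recording. First, by taking the concentrated profile compactly supported, your choice $k=1/x$ makes every term $j\neq 1$ vanish identically, so you never need the paper's non-negativity of the coefficients $c_n=\eta(-n)$, which the paper invokes to drop the other translates without cancellation; note, though, that your lower bound $|g*\phi_k(x)|\gtrsim 1$ (like the paper's claim $g*\mathcal{F}^{-1}[\hat{\eta}_{I_0^k}]\gtrsim 1_{[-1,1]}$) still ignores possible cancellation from the part of $\eta$ outside $[-1,1]$, so both arguments implicitly lean on the non-negativity of $\eta$ that the paper assumes elsewhere. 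Second, you also address the endpoint $(p_1,p_2)=(\infty,1)$, which the paper's proof does not treat; your main case $\hat{\eta}(0)\neq 0$ is fine (and under the non-negativity just mentioned one has $\hat{\eta}(0)=\int\eta>0$ automatically, so it is the only case), but the parenthetical fallback ``argue at a single scale'' cannot work as stated: at any fixed scale $k$ the operator is trivially bounded $L^\infty\times L^1\to L^1$ with constant $\|\eta\|_{\ell^1(\mathbb{Z})}\|\eta\|_{L^1}$, so any disproof at $p_2=1$ must genuinely use the supremum over scales to manufacture $Mg$; in the degenerate case one would instead need, say, a modulated $f$ analysed over a range of scales.
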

\begin{proof}
 Restrict $k \geq 1$. Let $\hat{\eta} (x) = \sum_{ n \in \mathbb{Z}} c_n e^{2 \pi i n x}$ so that $f \mapsto \sum_{ \tau \in \mathbb{Z}} f* \mathcal{F}^{-1} \left[ \eta_{I^{k}_\tau} \right]$ has multiplier given by $m_k(\xi) = \sum_{n \in \mathbb{Z}} c_n e^{2 \pi i nk^{-1} \xi}$, and the operator may be represented as $f \mapsto \sum_{n \in \mathbb{Z}} c_n f(x-k^{-1} n)$.  Fix $k_0 \in \mathbb{N}$. Note $c_1 =\eta(-1) =  \int_{\mathbb{T}} \hat{\eta}(\xi) e^{-2 \pi i \xi} d\xi >0$.  Pick $f$ satisfying $1_{[-k_0^{-1}, k_0^{-1}]} \leq f \leq 1_{[-2k_0^{-1},2 k_0^{-1}]}$ and $g=1_{[-1,1]}$. Then $g * \eta_{I_0^k} \gtrsim 1_{[-1,1]} $ for all $k \geq 1$. Thus,  $\sup_{k \in \mathbb{R}} \left| \sum_{\tau \in \mathbb{Z}} f*\eta_{I_\tau^k}(x)\right| =\sup_{k \in \mathbb{Z}} \left| \sum_{n \in \mathbb{Z}} c_n f(x-k^{-1} n)\right|$. As $c_n =\eta(-n) \geq 0$ for all $n$, it suffices to observe
 
 \begin{eqnarray*}
 \mathcal{M}(f, g)(x) &\gtrsim& \sup_{k \geq 1} \left| \sum_{n \in \mathbb{Z}} c_n f(x-k^{-1} n) 1_{[-1,1]} (x)\right| \\ &\geq&  \sup_{k \geq 1} \left| c_1  f(x-k^{-1} ) \right| 1_{[-1,1]} (x) \\ &\gtrsim& 1_{[-1,1]}(x). 
 \end{eqnarray*}
 Therefore, $\left| \left|  \mathcal{M}(f_{k_0}, g) \right| \right|_{\frac{p_1p_2}{p_1 + p_2}} \gtrsim 1$ while $||f_k||_{p_0} \sim k_0^{-1/p_1}$ and $||g||_{p_2} \simeq 1$. If $p_1 = \infty$, then estimates are trivially satisfied by virtue of $\mathcal{M}(f, g)(x) \lesssim ||f||_\infty Mg(x). $

\end{proof}

\begin{lemma}
Fix $k_0 \in \mathbb{N}$. Let $S \subset [1, 2^{k_0}] \cap \mathbb{N}.$ Then there exists $n \in [-2^{-k_0}, 2^{k_0}] \cap \mathbb{Z}$ such that

\begin{eqnarray*}
\left| \left\{ 2^k + n \right\}_{0 \leq k < k_0} \cap S \right|  \geq  \frac{ k_0 |S|}{2 2^{k_0}+1}.
\end{eqnarray*}

\end{lemma}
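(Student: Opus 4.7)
The plan is to establish the bound by a first moment (pigeonhole) argument: I would sum the quantity $|\{2^k + n\}_{0 \leq k < k_0} \cap S|$ over all allowed shifts $n$ and show the total equals exactly $k_0 |S|$, so that averaging over the $2 \cdot 2^{k_0} + 1$ admissible shifts produces one $n$ realizing at least the claimed bound.

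The key computation is to swap the order of summation,
\begin{eqnarray*}
\sum_{n \in [-2^{k_0}, 2^{k_0}] \cap \mathbb{Z}} \left| \{2^k + n\}_{0 \leq k < k_0} \cap S \right| = \sum_{0 \leq k < k_0} \left| \left\{ n \in [-2^{k_0}, 2^{k_0}] \cap \mathbb{Z} : 2^k + n \in S \right\} \right|,
\end{eqnarray*}
and then to observe that for each fixed $k \in \{0, \dots, k_0-1\}$ the map $n \mapsto 2^k + n$ is a bijection onto the translate $S - 2^k$. Since $S \subset [1, 2^{k_0}]$ and $2^k \leq 2^{k_0-1}$, every preimage $n = s - 2^k$ with $s \in S$ lies in $[1 - 2^{k_0 - 1}, 2^{k_0} - 1]$, which is contained in the shift range $[-2^{k_0}, 2^{k_0}] \cap \mathbb{Z}$. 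Thus the inner count equals $|S|$, giving total $k_0 |S|$.

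From here the conclusion is immediate: dividing by $2 \cdot 2^{k_0} + 1$ gives the average value of $|\{2^k + n\}_{0 \leq k < k_0} \cap S|$ over admissible $n$, and at least one $n$ must attain or exceed this average, which is the claimed lower bound. There is no serious obstacle here; the only point requiring care is verifying the containment of preimages so that the inner sum is exactly $|S|$ rather than a lower bound of it.
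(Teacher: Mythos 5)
Your proof is correct and follows essentially the same route as the paper: a double-counting/averaging argument showing the total count over all admissible shifts $n$ equals exactly $k_0|S|$ (using that $S \subset [1,2^{k_0}]$ forces every preimage $s - 2^k$ into the shift range, and that the powers $2^k$ are distinct), then pigeonholing over the $2\cdot 2^{k_0}+1$ shifts. The only difference is cosmetic — you swap the sum to fix $k$ first while the paper fixes the element of $S$ first — so no further comment is needed.
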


\begin{proof}
It suffices to note

\begin{eqnarray*}
\sum_{|n| \leq 2^{k_0}} \sum_{1 \leq m \leq 2^{k_0}} 1_S(m) 1_{\{ 2^k +n\}_{0 \leq k < k_0}}(m) =\sum_{1 \leq x \leq 2^{k_0}} 1_S(m)\sum_{|n| \leq 2^{k_0}}  1_{\{ 2^k +n\}_{0 \leq k < k_0}}(m) = k_0 |S|. 
\end{eqnarray*}

\end{proof}

\begin{prop}
Fix $k_0 \in \mathbb{N}$. Then there exists a set $\mathcal{N}_{k_0} \in [-2^{k_0}, 2^{k_0}] \cap \mathbb{Z}$ satisfying $|\mathcal{N}_{k_0}|\sim 2^{k_0} /k_0$ and

\begin{eqnarray*}
\left| \bigcup_{ n \in \mathcal{N}_{k_0}} \left\{ 2^k +n \right\}_{ 0 \leq k  < k_0} \right| \geq 2^{k_0}/2. 
\end{eqnarray*}

\end{prop}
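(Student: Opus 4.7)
The plan is to iterate the preceding lemma greedily, removing at each stage the points already captured. Writing $T_n := \{2^k + n : 0 \leq k < k_0\}$, I would set $S_0 := [1, 2^{k_0}] \cap \mathbb{N}$, so $|S_0| = 2^{k_0}$. At step $i \geq 0$, I apply the previous lemma with $S = S_i \subset [1, 2^{k_0}] \cap \mathbb{N}$ to select an $n_{i+1} \in [-2^{k_0}, 2^{k_0}] \cap \mathbb{Z}$ with
$$|T_{n_{i+1}} \cap S_i| \;\geq\; \frac{k_0 |S_i|}{2 \cdot 2^{k_0} + 1},$$
and set $S_{i+1} := S_i \setminus T_{n_{i+1}}$, then finally declare $\mathcal{N}_{k_0} := \{n_1, \ldots, n_j\}$ for a suitable $j$ to be chosen.

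The per-step shrinkage is multiplicative, namely $|S_{i+1}| \leq |S_i|\bigl(1 - k_0/(2 \cdot 2^{k_0} + 1)\bigr)$, so iterating and using $1 - x \leq e^{-x}$ yields
$$|S_j| \;\leq\; 2^{k_0} \exp\!\left(-\frac{j k_0}{2 \cdot 2^{k_0} + 1}\right).$$
Choosing $j$ to be the smallest integer exceeding $(2 \cdot 2^{k_0} + 1)\ln(2)/k_0$ makes $j \sim 2^{k_0}/k_0$ and forces $|S_j| \leq 2^{k_0}/2$. Since every element of $S_0 \setminus S_j$ lies in $T_{n_i}$ for some $i \leq j$, one concludes
$$\left|\bigcup_{n \in \mathcal{N}_{k_0}} T_n \right| \;\geq\; |S_0| - |S_j| \;\geq\; 2^{k_0}/2.$$

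Two small points that I would verify but do not expect to cause trouble: first, the $n_i$ are automatically distinct, since any repeated choice would force $T_{n_{i+1}} \cap S_i = \emptyset$, contradicting the strictly positive lower bound from the lemma as long as $S_i \neq \emptyset$. Second, the matching lower bound $|\mathcal{N}_{k_0}| \gtrsim 2^{k_0}/k_0$ is free: each $T_n$ has at most $k_0$ elements, so covering a set of size $\geq 2^{k_0}/2$ demands at least $2^{k_0}/(2 k_0)$ translates. There is no genuine obstacle here; the preceding lemma is tailored to supply precisely the proportional per-round progress that drives a geometric decay, so the argument is a standard greedy-covering computation.
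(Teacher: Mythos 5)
Your proposal is correct and follows essentially the same route as the paper: iterate the counting lemma greedily on the residual set, obtain the multiplicative shrinkage $|S_{i+1}| \leq |S_i|\bigl(1 - k_0/(2\cdot 2^{k_0}+1)\bigr)$, and stop after $\sim 2^{k_0}/k_0$ rounds so that at most half of $[1,2^{k_0}]$ remains uncovered. Your added remarks on distinctness of the $n_i$ and the matching lower bound on $|\mathcal{N}_{k_0}|$ are fine and only make the argument slightly more complete than the paper's sketch.
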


\begin{proof}
Initialize $S_0 = [1, 2^{k_0}]$. Then select $S_1 = S_0 \cap \left[ \{2^k\}_{0 \leq k<k_0}\right]^c$. Apply the proceeding lemma to $S= S_1$. This yields an $n_1$ such that 

\begin{eqnarray*}
\left| \left\{ 2^k\right\} \bigcup  \left\{ 2^k + n_1 \right\} \right| \gtrsim 2 k_0. 
\end{eqnarray*}
Inductively, we obtain $S_\mu: |S_\mu| \leq |S_{\mu-1}| \left[ 1-\frac{ k_0}{2 \cdot 2^{k_0}} \right]$ and points $n_\mu$ for which 

\begin{eqnarray*}
\left| [1, 2^{k_0} ] \cap \left\{ \bigcup_{1  \leq \gamma \leq \mu} \left\{ 2^k + n_\mu \right\}_{1 \leq k < k_0} \right\} ^c \right| = |S_\mu|. 
\end{eqnarray*}
 Therefore, $|S_\mu| \leq \left[ 1-\frac{ k_0}{2 \cdot 2^{k_0}} \right]^\mu 2^{k_0} \leq 2^{k_0} /2$ so long as 
 
 \begin{eqnarray*}
\mu \gtrsim \frac{1}{|\log(1-\frac{k_0} {2 2^{k_0}})|} \sim \frac{2^{k_0}}{k_0}.
 \end{eqnarray*}

\end{proof}
\begin{prop}
Let $\hat{\eta} \in C^\infty ([-1/2,1/2])$ satisfy $\eta \gtrsim 1_{[-1,1]}$. Then

\begin{eqnarray*}
\mathcal{M}_\eta : (f, g) \mapsto \sup_{k \in \mathbb{Z}} \left| \sum_{ \tau \in \mathbb{Z}} f*\mathcal{F}^{-1} \left[\hat{ \eta}_{I^{2^k}_\tau} \right]  g *\mathcal{F}^{-1} \left[  \hat{\eta}_{I^{2^k}_0} \right] \right|
\end{eqnarray*}
maps $L^{p_1}(\mathbb{R}) \times L^{p_2}(\mathbb{R}) \rightarrow L^{p_1p_2/(p_1 + p_2)}(\mathbb{R})$ iff $p_1 = \infty, 1 < p_2 \leq \infty$. \end{prop}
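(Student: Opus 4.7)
The positive assertion for $p_1 = \infty, 1 < p_2 \leq \infty$ is a pointwise maximal-function domination. Poisson summation yields $\sum_{\tau} f * \mathcal{F}^{-1}[\hat{\eta}_{I^{2^k}_\tau}](x) = \sum_{m \in \mathbb{Z}} \eta(m) f(x - m 2^{-k})$, which is bounded by $\|f\|_\infty \sum_m |\eta(m)| \lesssim \|f\|_\infty$ uniformly in $k$ (smoothness of $\hat{\eta}$ forces $\eta \in \ell^1(\mathbb{Z})$), and $g * \mathcal{F}^{-1}[\hat{\eta}_{I^{2^k}_0}] = g * 2^k \eta(2^k \cdot)$ is controlled pointwise by $Mg$ uniformly in $k$ via Schwartz decay of $\eta$. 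The Hardy--Littlewood maximal theorem closes this case. The failure at $p_2 = 1$ (with $p_1 = \infty$) follows from the usual failure of $M : L^1 \not\to L^1$, e.g.\ by testing on $f = 1_{[0,1]}$, $g = 1_{[-\varepsilon, \varepsilon]}$ and letting $\varepsilon \to 0$.

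For the main negative direction ($p_1 < \infty$), I will leverage the combinatorial covering proposition just established. Fix $k_0 \in \mathbb{N}$ and let $\mathcal{N}_{k_0} \subset [-2^{k_0}, 2^{k_0}] \cap \mathbb{Z}$ be the set it produces, so $|\mathcal{N}_{k_0}| \sim 2^{k_0}/k_0$ while $\mathcal{E}_{k_0} := \bigcup_{0 \leq j < k_0} (2^j + \mathcal{N}_{k_0})$ contains at least $2^{k_0}/2$ integers of $[1, 2^{k_0}]$. Fix a nonnegative $\phi \in C^\infty_c([-1/4, 1/4])$ with $\phi(0) = 1$, and define
\[
f := \sum_{\nu \in \mathcal{N}_{k_0}} \phi(\cdot - \nu), \qquad g := 1_{[-C 2^{k_0}, C 2^{k_0}]}
\]
for an absolute constant $C = C(\eta) \gg 1$. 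Since the bumps in $f$ are integer-centered and hence disjoint, $\|f\|_{p_1} \sim (2^{k_0}/k_0)^{1/p_1}$ and $\|g\|_{p_2} \sim (2^{k_0})^{1/p_2}$.

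The key step is the pointwise lower bound $\mathcal{M}_\eta(f, g) \gtrsim 1$ on $\mathcal{E}_{k_0}$. At the scale $\sigma = 2^{-j}$ for $0 \leq j < k_0$, the Poisson expansion with $k = -j$ evaluated at $x = 2^j + \nu_0$, $\nu_0 \in \mathcal{N}_{k_0}$, has $m = 1$ contribution equal to $\eta(1) \phi(0) \gtrsim 1$; the remaining terms are nonnegative upon invoking the nonnegativity of the Fourier coefficients of $\hat{\eta}$ exactly as in the preceding proposition. Simultaneously, the substitution $u = 2^{-j}(x - y)$ gives
\[
g * \mathcal{F}^{-1}[\hat{\eta}_{I^{2^{-j}}_0}](x) = \int_{2^{-j}(x - C 2^{k_0})}^{2^{-j}(x + C 2^{k_0})} \eta(u)\, du,
\]
whose integration range contains $[-(C-1), C]$ for $x \in [1, 2^{k_0}]$ and $j < k_0$; for $C$ large enough in terms of $\eta$ this integral exceeds $\tfrac{1}{2}\hat{\eta}(0) \gtrsim 1$, as $\hat{\eta}(0) = \int \eta > 0$ follows from $\eta \gtrsim 1_{[-1,1]}$. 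Taking the supremum over $j$ yields $\mathcal{M}_\eta(f, g) \gtrsim 1$ on $\mathcal{E}_{k_0}$, hence on a set of measure $\gtrsim 2^{k_0}$ after a $1/4$-thickening.

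Assembling the sizes, $\|\mathcal{M}_\eta(f, g)\|_{p_1 p_2/(p_1+p_2)} \gtrsim (2^{k_0})^{1/p_1 + 1/p_2}$, so the ratio with $\|f\|_{p_1} \|g\|_{p_2} \sim (2^{k_0}/k_0)^{1/p_1} (2^{k_0})^{1/p_2}$ equals $k_0^{1/p_1}$, which is unbounded as $k_0 \to \infty$ precisely when $p_1 < \infty$. The chief obstacle, which is handled by the combinatorial proposition, is the loss of scale freedom relative to the continuous-scale case treated earlier: there one bump sufficed because one could set $k = 1/x$ for each $x$, whereas here one must pack $\sim 2^{k_0}/k_0$ integer bumps so that their dyadic translates $2^j + \mathcal{N}_{k_0}$ still cover a positive fraction of $[1, 2^{k_0}]$, yielding exactly the logarithmic slack needed to defeat boundedness.
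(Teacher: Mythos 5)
You have it right, and your argument is essentially the paper's: the positive direction is the same pointwise domination $\mathcal{M}_\eta(f,g)\lesssim \|f\|_\infty\, Mg$, and the negative direction feeds the preceding combinatorial proposition (the set $\mathcal{N}_{k_0}$ with $|\mathcal{N}_{k_0}|\sim 2^{k_0}/k_0$ whose translates $2^j+\mathcal{N}_{k_0}$, $0\le j<k_0$, cover half of $[1,2^{k_0}]$) into the representation $\sum_\tau f*\mathcal{F}^{-1}[\hat\eta_{I^{2^k}_\tau}](x)=\sum_n c_n f(x-2^{-k}n)$, isolating the $c_1$ term via nonnegativity of the $c_n$. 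The only genuine difference is normalization: the paper keeps $g=1_{[-1,1]}$ and instead compresses the bump configuration, writing $f_{k_0}=\sum_{n\in\mathcal{N}_{k_0}}f(2^{k_0}(x-n))$ with $f=1_{[-2^{-k_0},2^{-k_0}]}$; as printed that formula is scaling-inconsistent (it yields $\|f_{k_0}\|_{p_1}\sim(2^{-k_0}/k_0)^{1/p_1}$ rather than the claimed $k_0^{-1/p_1}$, and with bumps at the integer points of $\mathcal{N}_{k_0}$ the factor $g*\mathcal{F}^{-1}[\hat\eta_{I_0^{2^{-j}}}]$ is only $O(2^{-j})$ on the covered set, since that kernel has height $2^{-j}$ against a fixed window $[-1,1]$). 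Your choice $g=1_{[-C2^{k_0},C2^{k_0}]}$ is exactly the dilation-by-$2^{k_0}$ image of the intended construction---legitimate because the dyadic scale family is invariant under such dilations---and it makes the lower bound on the second factor honest on all of $[1,2^{k_0}]$, so your version is if anything tighter than the paper's write-up; the final ratio $k_0^{1/p_1}$ is the same. One caveat: your assertion that $\hat\eta(0)=\int\eta>0$ follows from $\eta\gtrsim 1_{[-1,1]}$ alone is not literally true, since $\eta$ may be negative away from $[-1,1]$; it does follow from nonnegativity of the coefficients $c_n=\eta(-n)$, because then $\hat\eta(0)=\sum_n c_n\ge c_1\gtrsim 1$---the same unstated standing hypothesis (recorded only in the paper's introduction) that both you and the paper already invoke for the first factor, so this is a one-line repair rather than a gap. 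Your extra argument ruling out $(p_1,p_2)=(\infty,1)$ is correct and goes beyond what the paper records.
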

\begin{proof}
Clearly,  $M_\eta (f, g) (x) \lesssim ||f||_\infty Mg(x)$, so all estimates of the form $L^{\infty} \times L^{p_2} \rightarrow L^{p_2}$ are available. For the other direction, observe $\hat{\eta}(x) = \sum_{n \in \mathbb{Z}} c_n e^{2 \pi i n x}$ satisfies $c_n \geq 0$ for all $n \in \mathbb{Z}$ and $c_1>0$.
Let $f_{k_0} = \sum_{ n \in \mathcal{N}_{k_0}} f(2^{k_0} (x-n))$ with $f=1_{[-2^{-k_0}, 2^{-k_0}]} $  and $g=1_{[-1,1]}$. Then $||f_{k_0}||_{p_1} \sim k_0^{-1/p_1}, ||g||_{p_2} \sim 1$, while $\left| \left|\mathcal{M}_\eta (f_{k_0}, g) \right| \right|_{p_1 p_2/ (p_1 + p_2)} \sim 1$. As $p_1 \not = \infty$, taking $k_0 \rightarrow \infty$ yields the claim. 
\end{proof}

\begin{lemma}
Fix $k_0 \in \mathbb{N}$. Let $\mathcal{K} \subset [1, 2^{k_0-1}]$ satisfy $|\mathcal{K}| = k_0$ and assume $\min_{x \not =y \in \mathcal{K}}  |x-y| \geq 1$. Then there exists a set $\mathcal{N}_{k_0} \subset [-2^{k_0}, 2^{k_0}]$ satisfying $|\mathcal{N}_{k_0}| \sim 2^{k_0}/k_0$ and 
\begin{eqnarray*}
\left| \bigcup_{k \in \mathcal{K}}\bigcup_{\theta \in \mathcal{N} }[k+ \theta -1/2, k+ \theta +1/2] \cap [1, 2^{k_0}] \right|  \geq  2^{k_0}/2.
\end{eqnarray*}

\end{lemma}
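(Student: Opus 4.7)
The plan is to mimic the averaging/greedy selection argument used in the preceding proposition, replacing the discrete set $\{2^k\}_{0 \leq k < k_0}$ by the general configuration $\mathcal{K}$ and passing from integer-counting to Lebesgue measure. The hypothesis $\min_{x \neq y \in \mathcal{K}}|x-y|\geq 1$ is the key ingredient that keeps the proof essentially one-dimensional: for every fixed $\theta$, the unit intervals $[k+\theta-1/2, k+\theta+1/2]$ with $k \in \mathcal{K}$ have pairwise disjoint interiors, so their union has Lebesgue measure $|\mathcal{K}|=k_0$.

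First, I would establish a continuous analogue of the preceding lemma. Given any measurable set $S \subset [1, 2^{k_0}]$, compute the average
\begin{eqnarray*}
\frac{1}{2\cdot 2^{k_0}}\int_{-2^{k_0}}^{2^{k_0}} \left| \bigcup_{k \in \mathcal{K}} [k+\theta-1/2, k+\theta+1/2] \cap S \right| d\theta = \frac{1}{2\cdot 2^{k_0}} \sum_{k \in \mathcal{K}} \int_S \int_{-2^{k_0}}^{2^{k_0}} 1_{[k+\theta-1/2,k+\theta+1/2]}(x)\, d\theta\, dx,
\end{eqnarray*}
using the near-disjointness of the intervals to turn the measure of the union into a sum. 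Since $x \in [1,2^{k_0}]$ and $k \in [1,2^{k_0-1}]$ force $x-k \in [-2^{k_0},2^{k_0}]$, the inner $\theta$-integral equals $1$, and the average simplifies to $|\mathcal{K}|\,|S|/(2 \cdot 2^{k_0}) = k_0 |S|/2^{k_0+1}$. Hence some $\theta \in [-2^{k_0},2^{k_0}]$ achieves at least this value.

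Next I would iterate greedily. Set $S_0 = [1,2^{k_0}]$, apply the averaging step to obtain $\theta_1$, and let $S_1 := S_0 \setminus \bigcup_{k \in \mathcal{K}}[k+\theta_1-1/2, k+\theta_1+1/2]$. Inductively pick $\theta_{\mu+1}$ so that $|S_{\mu+1}| \leq |S_\mu|\left(1 - \frac{k_0}{2^{k_0+1}}\right)$. After $\mu$ steps we obtain
\begin{eqnarray*}
|S_\mu| \leq 2^{k_0}\left(1 - \frac{k_0}{2^{k_0+1}}\right)^{\mu} \leq 2^{k_0} \exp\!\left(-\mu k_0/2^{k_0+1}\right),
\end{eqnarray*}
so choosing $\mu \sim 2^{k_0}/k_0$ yields $|S_\mu| \leq 2^{k_0}/2$. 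Setting $\mathcal{N}_{k_0} := \{\theta_1, \ldots, \theta_\mu\}$ completes the proof.

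The only real obstacle is bookkeeping: one must verify that $\theta$ can always be restricted to $[-2^{k_0},2^{k_0}]$ (which is precisely why the hypothesis $\mathcal{K} \subset [1,2^{k_0-1}]$ appears, giving room for translation while keeping $k+\theta$ within reach of $[1,2^{k_0}]$), and that the unit intervals $[k+\theta-1/2,k+\theta+1/2]$ around distinct $k \in \mathcal{K}$ have disjoint interiors so that the measure of the union equals $|\mathcal{K}| = k_0$. Both are immediate from the separation condition $|x-y|\geq 1$ and the choice of range for $\theta$, so no further technical work is needed.
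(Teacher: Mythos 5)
Your proposal is correct and follows essentially the same route as the paper: a Fubini/averaging computation over translations $\theta \in [-2^{k_0},2^{k_0}]$ (using the separation hypothesis so the translated unit intervals are essentially disjoint and the $\theta$-range hypothesis so each inner integral is $1$), followed by the same greedy iteration with decay factor $1-\tfrac{k_0}{2^{k_0+1}}$ and $\mu \sim 2^{k_0}/k_0$ steps. Your write-up actually makes the exponential-decay bookkeeping slightly more explicit than the paper, which simply says to iterate "exactly as before."
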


\begin{proof}
For any set $X \subset \mathbb{R}$, let $N_1[X] := \left\{ x \in \mathbb{R} : dist(x, X) < 1/2 \right\}$. It suffices to note that for any subset $S \subset [1, 2^{k_0}]$ 

\begin{eqnarray*}
\int_{[-2^{k_0}, 2^{k_0}]} \int_{\mathbb{R}} 1_S(x) 1_{N_{1/2}}\left[\mathcal{K}+n \right](x) dx dn=\int_{\mathbb{R}} 1_S(x)\int_{[-2^{k_0}, 2^{k_0}]}  1_{N_{1/2}}\left[ \mathcal{K}+n \right](x)dn dx = k_0 |S|. 
\end{eqnarray*}
Therefore, there exists  $\theta_1:-2^{k_0} \leq \theta_1 \leq  2^{k_0}$ such that 

\begin{eqnarray*}
\left|  \bigcup_{k \in \mathcal{K}} [ k +\theta_1-1/2, k + \theta_1 + 1/2]  \cap S \right| \geq \frac{ k_0 |S|} {2^{k_0+1}}.
\end{eqnarray*}
Iterate this process exactly as before to obtain a set $\mathcal{N}$ of size $|\mathcal{K}| /k_0$ for which 

\begin{eqnarray*}
\left| \bigcup_{\theta \in \mathcal{N}_{k_0}(\mathcal{K})} \bigcup_{k \in \mathcal{K}} [ k +\theta-1/2, k + \theta + 1/2] \cap [1, 2^{k_0} ]\right| \geq 2^{k_0} /2. 
\end{eqnarray*}

\end{proof}
\begin{prop}
Let $\hat{\eta} \in C^\infty ([-1/2,1/2])$ satisfy $\eta \gtrsim 1_{[-1,1]}$.  Let $\Sigma \subset \mathbb{R}$ satisfy  $\#\left\{ \Sigma \right\}=\infty$.  Then

\begin{eqnarray*}
\mathcal{M}^\Sigma_\eta : (f, g) \mapsto \sup_{\sigma \in \Sigma} \left| \sum_{ \tau \in \mathbb{Z}} f*\mathcal{F}^{-1} \left[ \hat{\eta}_{I^{\sigma}_\tau} \right] g *\mathcal{F}^{-1} \left[  \hat{\eta}_{I^{\sigma}_0} \right] \right|
\end{eqnarray*}
maps $L^{p_1}(\mathbb{R}) \times L^{p_2}(\mathbb{R}) \rightarrow L^{p_1p_2/(p_1 + p_2)}(\mathbb{R})$ iff $p_1 = \infty, 1 < p_2 \leq \infty$. \end{prop}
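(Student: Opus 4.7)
My plan is to handle the easy direction $(\Leftarrow)$ first. When $p_1 = \infty$, the pointwise bound $|\mathcal{M}^{\Sigma}_{\eta}(f,g)(x)| \lesssim \|f\|_{\infty} Mg(x)$ (coming from $\hat{\eta} \in \mathcal{S}$, which makes $g \ast \mathcal{F}^{-1}[\hat{\eta}_{I^{\sigma}_0}]$ controlled by $\|\eta\|_1 Mg(x)$, and similarly for the $\tau$-sum applied to $f$) combined with boundedness of the Hardy--Littlewood maximal operator on $L^{p_2}$ for $1 < p_2 \leq \infty$ delivers the stated mixed estimates. The real content is $(\Rightarrow)$: proving unboundedness for any $p_1 < \infty$.

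For the reverse direction, I intend to construct for each large $k_0 \in \mathbb{N}$ a test pair $(f_{k_0}, g_{k_0})$ forcing the ratio $\|\mathcal{M}^{\Sigma}_{\eta}(f_{k_0}, g_{k_0})\|_{q}/(\|f_{k_0}\|_{p_1} \|g_{k_0}\|_{p_2})$ to grow like $k_0^{1/p_1}$. The first and most delicate step is scale extraction: using $\#\Sigma = \infty$, select $\sigma_1, \ldots, \sigma_{k_0} \in \Sigma$ and a rescaling factor $c > 0$ so that $\mathcal{K} := \{c/\sigma_j\}_{j=1}^{k_0}$ lies in $[1, 2^{k_0 - 1}]$ with pairwise separation $\geq 1$. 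The transformation $\sigma \mapsto c\sigma$ paired with $f(x) \mapsto f(cx)$, $g(x) \mapsto g(cx)$ preserves the target ratio exactly when $1/q = 1/p_1 + 1/p_2$, so working in the rescaled problem is WLOG. If $\Sigma$ accumulates at $0$ or $\infty$, the set $\{1/\sigma : \sigma \in \Sigma\}$ spans an unbounded range and the selection is immediate after rescaling. If $\Sigma$ clusters at a finite $\sigma^{\ast} \in (0, \infty)$, pick $k_0$ elements within an $\epsilon$-neighborhood with $\epsilon \sim k_0 \sigma^{\ast}/2^{k_0}$; infinitely many such elements are available and the required separation (in absolute terms) is only $\sigma^{\ast}/2^{k_0-1}$.

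Having extracted $\mathcal{K}$, I apply the preceding Lemma to obtain $\mathcal{N}_{k_0} \subset [-2^{k_0}, 2^{k_0}]$ with $|\mathcal{N}_{k_0}| \sim 2^{k_0}/k_0$ and
\begin{eqnarray*}
\left| \bigcup_{k \in \mathcal{K}, \theta \in \mathcal{N}_{k_0}} [k + \theta - 1/2, k + \theta + 1/2] \cap [1, 2^{k_0}] \right| \geq 2^{k_0}/2.
\end{eqnarray*}
Set $\phi = 1_{[-1/2, 1/2]}$, $f_{k_0} := \sum_{\theta \in \mathcal{N}_{k_0}} \phi(\cdot - \theta)$, and $g_{k_0} := 1_{[-A, A]}$ with $A \sim 2^{k_0}$ chosen so that $g_{k_0} \ast \mathcal{F}^{-1}[\hat{\eta}_{I_0^{\sigma}}] \gtrsim 1$ on $[1, 2^{k_0}]$ for every relevant $\sigma = 1/k$, $k \in \mathcal{K}$. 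By Poisson summation, $\sum_{\tau \in \mathbb{Z}} f_{k_0} \ast \mathcal{F}^{-1}[\hat{\eta}_{I_{\tau}^{\sigma}}] = \sum_{n \in \mathbb{Z}} \eta(n) f_{k_0}(\cdot - n/\sigma)$. Since the Fourier coefficients $\eta(n)$ of $\hat{\eta}$ are nonnegative with $\eta(1) > 0$ (forced by $\eta \gtrsim 1_{[-1,1]}$), for every $x$ within distance $1/2$ of some $k + \theta$ the term $n = 1$ at scale $\sigma = 1/k$ contributes $\eta(1) \phi(x - k - \theta) = \eta(1)$ while no other term cancels it. Hence $|\mathcal{M}^{\Sigma}_{\eta}(f_{k_0}, g_{k_0})(x)| \gtrsim 1$ on a set of measure $\geq 2^{k_0}/2$, yielding $\|\mathcal{M}^{\Sigma}_{\eta}(f_{k_0}, g_{k_0})\|_{q} \gtrsim 2^{k_0/q}$. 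Combined with $\|f_{k_0}\|_{p_1} \sim (2^{k_0}/k_0)^{1/p_1}$ and $\|g_{k_0}\|_{p_2} \sim 2^{k_0/p_2}$ and the identity $1/q = 1/p_1 + 1/p_2$, the ratio reduces to $k_0^{1/p_1}$, diverging as $k_0 \to \infty$ precisely when $p_1 < \infty$.

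The principal obstacle is the scale-extraction step when $\Sigma$ accumulates at a finite positive point: one cannot extract well-separated $\sigma_j$'s directly, and must instead exploit the scale invariance of the operator ratio together with the density of $\Sigma$ near the accumulation point. Once an admissible $\mathcal{K}$ is produced, the subsequent construction and bookkeeping follow the preceding proposition almost verbatim, with the preceding Lemma (rather than its $\{2^k\}$-specific predecessor) supplying the combinatorial covering property.
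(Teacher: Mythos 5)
Your easy direction and the overall architecture of the hard direction (rescale, feed the reciprocals of the selected scales into the preceding covering lemma, put unit bumps of $f$ on $\mathcal{N}_{k_0}$, take $g$ the indicator of a long interval, use the nonnegativity of the coefficients $c_n=\eta(\mp n)$ together with the $n=1$ term, and close with the ratio computation yielding $k_0^{1/p_1}$) are the same as the paper's. The genuine gap is in your scale-extraction step: the claim that for an arbitrary infinite $\Sigma$ one can choose $\sigma_1,\dots,\sigma_{k_0}\in\Sigma$ and a single dilation $c$ with $\mathcal{K}=\{c/\sigma_j\}\subset[1,2^{k_0-1}]$ and pairwise gaps $\geq 1$ is false. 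Containment plus $1$-separation forces the selected reciprocals to satisfy $\max_{i\neq j}|1/\sigma_i-1/\sigma_j|\leq 2^{k_0-1}\min_{i\neq j}|1/\sigma_i-1/\sigma_j|$, and super-lacunary or super-clustered sets violate this for \emph{every} choice of $k_0$ elements: for $\Sigma=\{2^{2^j}\}_{j\geq 1}$ (accumulation at $\infty$) any $k_0$ reciprocals have largest-to-smallest gap ratio at least of order $2^{2^{k_0-1}}$, so they cannot be compressed into a window of multiplicative width $2^{k_0-1}$; for $\Sigma=\{1+2^{-2^j}\}_{j\geq 1}$ (accumulation at a finite point) the two closest of any $k_0$ chosen elements lie within about $2^{-2^{k_0-1}}$ of each other, so $1$-separation forces $c\gtrsim 2^{2^{k_0-1}}$ and then $c/\sigma_j\approx c\gg 2^{k_0-1}$. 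In particular, ``infinitely many elements within an $\epsilon$-neighborhood'' does not yield $k_0$ elements pairwise separated by $\sigma^{\ast}/2^{k_0-1}$ (accumulation means most of them are far closer together than that), and ``the reciprocals span an unbounded range'' makes the compression into $[1,2^{k_0-1}]$ harder, not easier.

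The repair --- and in essence the route the paper takes --- is to decouple the range from $2^{k_0}$: rescale only so that the selected reciprocals are $1$-separated (the paper's $\delta_{k_0}$ chosen below the minimum gap of $\Sigma_{k_0}^{-1}$), let $R$ be comparable to the diameter of the rescaled reciprocal set, and rerun the averaging proof of the preceding lemma on $[1,R]$; it gives $\mathcal{N}\subset[-R,R]$ with $|\mathcal{N}|\sim R/k_0$ whose translates by $\mathcal{K}$ cover half of $[1,R]$. With $g$ adapted to an interval of length $\sim R$ (or, as in the paper, after scaling everything back down so that $g=1_{[-1,1]}$ works), the bookkeeping gives output norm $\gtrsim R^{1/q}$ with $1/q=1/p_1+1/p_2$, while $\|f_{k_0}\|_{p_1}\sim (R/k_0)^{1/p_1}$ and $\|g\|_{p_2}\sim R^{1/p_2}$, so the $R$'s cancel and the same $k_0^{1/p_1}$ growth survives. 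Thus your endgame is fine; what must go is the insistence on the fixed window $[1,2^{k_0-1}]$ (and hence on $A\sim 2^{k_0}$ and measure $2^{k_0}/2$), replaced by a window whose size $R$ depends on the selected scales.
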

\begin{proof}
Fix $k_0 \in \mathbb{N}$. By scaling invariance, we may assume $\sigma \geq 2$ for at least $k_0$ many $\sigma \in \Sigma$. Denote this collection by $\Sigma_{k_0}$. Let $\Sigma^{-1}_{k_0}$ be the collection of reciprocals in $(0,1/2]$ and let $\delta_{k_0}$ be smaller than the minimum distance between two elements in $\Sigma^{-1}_{k_0}$ such that $ \lim_{k_0 \rightarrow \infty} k_0 \delta_{k_0}=0$. Then we may set $f(x)= 1_{[-\mathfrak{d}_k, \mathfrak{d}_k]}(x)$, $f_{k_0}(x) = \sum_{ n \in  \mathfrak{d}_k \mathcal{N}_{k_0}(\mathfrak{d}_k^{-1} \Sigma_{k_0}^{-1}) } f(\mathfrak{d}_{k_0}^{-1}(x-n))$, and $g=1_{[-1,1]}$. It is easy to observe $\mathcal{M}(f_{k_0}, g) \gtrsim 1_{\mathcal{S}_{k_0}}$ for some $\mathcal{S}_{k_0} \subset [0,1]$ with $|\mathcal{S}_{k_0}| \simeq 1$ while $||f_{k_0}||_{p_1} \sim k_0^{-1/p_1}$ and $||g||_{p_2} \sim 1$.
\end{proof}

\section{Mixed Counterexamples for a Maximal Adjoint}
\begin{definition}
For any $2 \leq p_1 \leq \infty$, let $W_{p_1}(\mathbb{R}) := \left\{ f \in L^{p_1}(\mathbb{R}): \hat{f} \in L^{p_1^\prime}(\mathbb{R}) \right\}$ with $||f ||_{W_{p_2}(\mathbb{R})}:= || \hat{f}||_{L^{p_1^\prime}(\mathbb{R})}$.  
\end{definition}
The Hausdorff-Young inequality says $|| f||_{L^{p_1}(\mathbb{R})} \leq  ||f ||_{W_{p_1}(\mathbb{R})} $ whenever $2 \leq p_2 \leq \infty$. Therefore, despite the fact that no $L^p$ estimates are available for $\mathcal{M}_\eta$, it is natural to ask whether any mixed estimates of the form $W_{p_1}(\mathbb{R}) \times L^{p_2}(\mathbb{R}) \rightarrow L^{p_1 p_2 / (p_1 + p_2)}(\mathbb{R})$ hold for $2 < p_1 \leq \infty$ and $1 \leq p_1 \leq \infty$. This section shows that there are no non-trivial positive answers to the above question. 

\begin{prop}
Let $\hat{\eta} \in C^\infty ([-1/2,1/2])$ satisfy $\eta \gtrsim 1_{[-1,1]}$. Then the maximal dyadic operator

\begin{eqnarray*}
\mathcal{M}_\eta : (f, g) \mapsto \sup_{k \in \mathbb{Z}} \left| \sum_{ \tau \in \mathbb{Z}} f*\mathcal{F}^{-1} \left[ \hat{\eta}_{I^{2^k}_\tau} \right] g *\mathcal{F}^{-1} \left[ \hat{ \eta}_{I^{2^k}_0}\right] \right|
\end{eqnarray*}
maps $W_{p_1}(\mathbb{R}) \times L^{p_2}(\mathbb{R}) \rightarrow L^{p_1p_2/(p_1 + p_2)}(\mathbb{R})$ iff $p_1 = \infty, 1 < p_2 \leq \infty$. \end{prop}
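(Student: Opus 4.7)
The ``if'' direction ($p_1=\infty$, $1<p_2\leq\infty$) is essentially the same as the corresponding half of Proposition 6: writing $T_k f(x) := \sum_{\tau} f*\mathcal{F}^{-1}[\hat\eta_{I_\tau^{2^k}}](x)$ in its Fourier-series form $\sum_{\nu\in\mathbb Z} c_\nu\, f(x-\nu 2^{-k})$ with $c_\nu=\eta(-\nu)$ yields $|T_k f(x)|\leq \|\eta\|_{\ell^1(\mathbb Z)}\,\|f\|_\infty$ uniformly in $k$, while the kernels $\mathcal{F}^{-1}[\hat\eta_{I_0^{2^k}}]=2^k\eta(2^k\cdot)$ admit a common radially decreasing $L^1$-majorant, so $\sup_k|g*\mathcal{F}^{-1}[\hat\eta_{I_0^{2^k}}](x)|\lesssim Mg(x)$. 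Combining with the Hausdorff--Young inequality $\|f\|_\infty\leq \|\hat f\|_1=\|f\|_{W_\infty}$ and the Hardy--Littlewood maximal theorem gives $\|\mathcal M_\eta(f,g)\|_{p_2}\lesssim \|f\|_{W_\infty}\|g\|_{p_2}$ for all $1<p_2\leq\infty$.

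For the converse, the case $p_1=2$ is free: by Plancherel $W_2(\mathbb R)=L^2(\mathbb R)$ with equivalent norms, so the $L^2\times L^{p_2}\to L^q$ failure established in Proposition 6 immediately gives the $W_2\times L^{p_2}\to L^q$ failure. For $2<p_1<\infty$ a genuine construction is needed, and my plan is to use the Fourier-side analog of the Proposition 6 test function. With $\mathcal N_{k_0}$ the set furnished by Proposition 5 (satisfying $|\mathcal N_{k_0}|\sim 2^{k_0}/k_0$ and the multi-scale covering property), set $\hat f_{k_0}(\xi)=\sum_{n\in\mathcal N_{k_0}}\mathbf 1_{[-2^{-k_0},2^{-k_0}]}(\xi-n)$. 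Disjointness of the bumps gives $\|f_{k_0}\|_{W_{p_1}}=\|\hat f_{k_0}\|_{p_1'}\sim\bigl(|\mathcal N_{k_0}|\cdot 2\cdot 2^{-k_0}\bigr)^{1/p_1'}\sim k_0^{-1/p_1'}\to 0$. The physical-side object is $f_{k_0}(x)=\check\psi(x)\,E(x)$ with $\check\psi(x)=\sin(2\pi 2^{-k_0}x)/(\pi x)$ a slowly varying sinc of width $2^{k_0}$ and $E(x)=\sum_{n\in\mathcal N_{k_0}}e^{2\pi i n x}$ a Dirichlet-type exponential sum.

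The heart of the proof is the lower bound $\|\mathcal M_\eta(f_{k_0},g)\|_q\gtrsim 1$ when $g=\mathbf 1_{[-1,1]}$. For each scale $2^k$, the multiplier of $T_k$ is the $2^k$-periodic extension of $\hat\eta(2^{-k}\cdot)$, which at integer frequencies $n$ evaluates to $\hat\eta(n2^{-k}\bmod 1)$; in particular for every $k\leq 0$ it equals $\hat\eta(0)\neq 0$ at every integer, while for $0<k\leq k_0$ it preserves only those $n$ with $2^k\mid n$. Combining this selection structure with the uniform lower bound $g*\mathcal{F}^{-1}[\hat\eta_{I_0^{2^k}}]\gtrsim 1$ on $[-\tfrac12,\tfrac12]$ (valid for the relevant scales), the multi-scale coverage property of $\mathcal N_{k_0}$ implies that for each integer $m\in[1,2^{k_0}]$ expressible as $m=n+2^\ell$ with $n\in\mathcal N_{k_0}$ and $0\le\ell<k_0$ some choice of scale makes $\mathcal M_\eta(f_{k_0},g)(x)\gtrsim 1$ on a $2^{-k_0}$-neighborhood of $m$, producing a set of positive measure on which the operator is of unit size.

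The principal obstacle is controlling the interaction between $\check\psi$ and $E$ carefully enough that the coverage-scale argument survives the passage to the Fourier-dual construction, and handling the index range $\frac{1}{p_1}+\frac{1}{p_2}<\frac{1}{2}$ (which includes $p_2=\infty$ with $p_1>2$). In this regime a single-scale comparison of $T_0f_{k_0}\approx\hat\eta(0)f_{k_0}$ against $\|f_{k_0}\|_{W_{p_1}}$ does not produce a blowup, and one must instead force distinct scales $k\in\{1,\ldots,k_0\}$ to contribute disjoint pieces of the output — a Poisson-summation translation of Proposition 5's greedy construction. This is what turns the $\log$-scale aggregation over scales into the needed $k_0^{1/p_1}$ divergence against the decaying $W_{p_1}$-norm, forcing the failure of the estimate for every $p_1<\infty$.
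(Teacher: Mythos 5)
Your ``if'' direction and the $p_1=2$ reduction via Plancherel are fine and agree with the paper, but the substance of the proposition is the failure for $2<p_1<\infty$, and there your construction breaks down. Two concrete problems. First, the ``selection structure'' is misstated: at scale $2^k$ the multiplier of $T_k$ is the periodization $\sum_\tau\hat\eta(2^{-k}\xi-\tau)$, whose value at an integer frequency $n$ is $\hat\eta$ evaluated at the representative of $2^{-k}n$ modulo $1$ in $[-1/2,1/2]$; this is generically nonzero for every $n$, so there is no selection of the $n$ divisible by $2^k$. Second, and fatally, the claimed bound $\mathcal{M}_\eta(f_{k_0},g)\gtrsim 1$ on a set of measure $\gtrsim 1$ is impossible for your Fourier-side test function: since $\|\hat f_{k_0}\|_{L^1}\sim |\mathcal{N}_{k_0}|\,2^{-k_0}\sim k_0^{-1}$ and the periodized multipliers are uniformly bounded, one has $\|T_k f_{k_0}\|_\infty\lesssim k_0^{-1}$ uniformly in $k$, while $\|g*\mathcal{F}^{-1}[\hat\eta_{I_0^{2^k}}]\|_\infty\le\|g\|_\infty\|\eta\|_{L^1}\lesssim 1$; hence $\mathcal{M}_\eta(f_{k_0},g)\lesssim k_0^{-1}$ pointwise, everywhere. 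Since $\|f_{k_0}\|_{W_{p_1}}\sim k_0^{-1/p_1'}\gg k_0^{-1}$ for $p_1>2$, this pair of test functions produces no blow-up at all (in the range where the target exponent exceeds $1$ the ratio in fact tends to zero), so the approach itself, not merely the exposition, fails; your closing paragraphs acknowledge the obstacle but do not resolve it.

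What is missing is exactly the paper's two ingredients. The paper keeps the bumps in physical space but places them on an arithmetic progression of spacing $5^{m_0}\sim k_0$: (i) because $2$ is a primitive root modulo $5^{m_0}$, the translates of this progression by the $\sim k_0$ powers $2^k$ are essentially disjoint and cover a set of measure $\gtrsim 2^{k_0}$, which is what keeps the operator output of unit size on a large set; and (ii) because the bump centers form an arithmetic progression, $\hat f_{k_0}$ is an envelope times a Dirichlet kernel, and the explicit computation $\|\sum_{\lambda\in\Lambda}e^{2\pi i\lambda\cdot}\|_{L^{p'}(\mathbb{T})}\sim|\Lambda|^{1/p}$ gives $\|\hat f_{k_0}\|_{p_1'}\sim k_0^{-1/p_1}$, i.e.\ the test function is a quasi-extremizer of Hausdorff--Young, so the smallness of its $L^{p_1}$ norm is inherited by its $W_{p_1}$ norm. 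Your greedy set $\mathcal{N}_{k_0}$ from Proposition 5 has neither property in the way you use it: transplanted to the frequency side it makes the first factor uniformly tiny, and on the physical side it lacks the arithmetic structure needed for the Hausdorff--Young quasi-extremality. These are precisely the ``fact (1)'' and ``fact (2)'' the paper highlights after its proof, and both are absent from your argument.
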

\begin{proof}
It suffices to prove unboundedness for $(f,g) \mapsto \sup_{k \in \mathbb{Z}} \left| f(\cdot-2^{-k}) g*\eta_{I_0^{2^k}} \right|$. To this end, we exploit the structure of the set $\{2^k\}_{k \in \mathbb{Z}}$ using a few elementary number theoretic facts: for every $m \geq 1$, the orbit of $\{2^k~\mod 5^m \}_{0 \leq k < 4 \cdot 5^{m-1}}$ consists of $4 \cdot 5^{m-1}$ distinct points.  This follows from the fact that $\phi (5^m) = 4 \cdot 5^{m-1}$, $2$ is a primitive root of $(\mathbb{Z}/ (5 \mathbb{Z}))^\times$, and $2^{5-1} \not \equiv 1 \mod 5^2$ ensure $2$ is a primitive root $\mod 5^m$ for all $m \geq 1$. Next, fix $k_0 \in \mathbb{N}$. Choose $m_0$ so that $ 5^{m_0} \sim k_0$. Then observe

\begin{eqnarray*}
\left| \bigcup_{0 \leq \tau \leq 2^{k_0} /k_0} \bigcup_{0 \leq k \leq k_0} \left[2^{k} + \tau 5^{m_0} -1/2, 2^k + \tau 5^{m_0}  +1/2 \right] \right| \gtrsim 2^{k_0} .
\end{eqnarray*}
This observation enables us to choose $f_{k_0} (x) = \sum_{|\tau| \leq 2^{k_0}/k_0} f(2^{k_0}(x+ \tau 5^{m_0}))$ and with $g=1_{[-1,1]}$ in which case $\left| \left|\mathcal{M}_\eta (f_{k_0}, g) \right| \right|_{p_1p_2/(p_1 + p_2)} \gtrsim 1$ and $||g||_{p_2} \sim 1$. It remains to show $||\hat{f}_{k_0}||_{p_1^\prime} \sim k_0^{-1/p_1}$. By rescaling, it clearly suffices to the bound 

\begin{eqnarray*}
\left| \left| \sum_{\lambda \in \Lambda} e^{2 \pi i \lambda \cdot} \right| \right|_{L^{p^\prime}(\mathbb{T})} \sim |\Lambda|^{1/p}
\end{eqnarray*}
whenever $1 \leq p^\prime \leq 2$ and $\Lambda$ is an arithmetic progression of length $|\Lambda|$. Suppose $\Lambda = \left\{ \sigma + \gamma k \right\}_{k_0 \leq k \leq k_1}$. Then 

\begin{eqnarray*}
\sum_{\lambda \in  \Lambda} e^{2 \pi i \lambda x} = e^{2 \pi i \sigma x} \frac{ e^{2 \pi i \gamma k_0 x} \left[ 1- e^{2 \pi i \gamma (k_1 - k_0  + 2)(x)}\right]}{1-e^{2 \pi i \gamma x} }. 
\end{eqnarray*}
It is very simple to observe $\left| \left| \sum_{\lambda \in  \Lambda} e^{2 \pi i \lambda \cdot} \right| \right|_{L^{p^\prime}(\mathbb{T})} =\left| \left| \frac{ \sin ( \pi \gamma  \left[  k_1-k_0+2 \right] x)}{\sin ( \pi \gamma x)} \right| \right|_{L^{p^\prime}(\mathbb{T})}$. Changing variables yields

\begin{eqnarray*}
\gamma^{-1/p^\prime} \left| \left| \frac{ \sin ( \pi  \left[  k_1-k_0+2 \right] x)}{\sin ( \pi x)} \right| \right|_{L^{p^\prime}(\gamma \mathbb{T})} \simeq  \left| \left| \frac{ \sin ( \pi  \left[  k_1-k_0+2 \right] x)}{\sin ( \pi x)} \right| \right|_{L^{p^\prime}(\mathbb{T})} \sim |k_1- k_0|^{1/p} \sim |\Lambda|^{1/p}. 
\end{eqnarray*}
 Indeed, the routine computations are as follows: the integrand is $\lesssim |k_1 - k_0 +2| $ on a set of size $\frac{1}{|k_1-k_0| }$. On $|x| \gtrsim \frac{1}{|k_1-k_2 + 2|}$, the integrand is at most $\frac{1}{|x|}$. Integration yields $ \left[  \frac{1}{|k_1-k_2 + 2|} \right]^{(-p^\prime +1)/ p^\prime} =  \left[ |k_1-k_2 + 2| \right]^{1/p}  $. 

\end{proof}



Note that we ruled out non-trivial mixed estimates for $\mathcal{M}_\eta$ by relying on two facts: (1) for each $k_0 \in \mathbb{N}$, $\{2^k : 1 \leq k \leq k_0 \}$ consists of approximately $k_0$ distinct integers modulo some integer of the same magnitude as $k_0$; (2) the characteristic function of some $\delta-$ neighborhood of an arithmetic progression is a quasi-extremizer for the Hausdorff-Young inequality. It is clear, however, that ruling out non-trivial mixed estimates for $\mathcal{M}^\Sigma_\eta$ in the case of a generic infinite set $\Sigma$ cannot rely on fact $(1)$ and so requires other insights.  Of course, fact $(2)$ has nothing to do with  $\Sigma$ and therefore remains at our disposal. 

The following lemma say that for any infinite set $\mathcal{S}\subset \mathbb{R}$ with a suitable growth condition it is the case that for  any $k_0 \in \mathbb{N}$ there is always some (potentially non-integer) length $\mathcal{L}_{k_0}(\mathcal{S})  \sim k_0$ so that the first $k_0$ elements of $\mathcal{S}$ are essentially disjoint modulo $\mathcal{L}_{k_0}(\mathcal{S})$. More precisely, we have

\begin{lemma}\label{HL}
Let $\mathcal{S}=\left\{ \alpha_j \right\} \subset \mathbb{R} ^+$ with $\alpha_1=1$ and rapidly increasing in the sense that $\alpha_{j+1}  \geq 2^j  \alpha_j $ for all $j \in \mathbb{N}$. Then there exists an absolute constant $C>0$ such that for every $k \in \mathbb{N}$ there exists $\theta_k \sim 1/k$ so that $|| \alpha_j \theta_k||_\mathbb{T} \sim j /k$ for all $\lceil C \log(k) \rceil\leq j \leq k$. 
\end{lemma}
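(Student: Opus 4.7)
The plan is a nested-interval greedy construction exploiting the super-geometric growth $\alpha_{j+1}\ge 2^j\alpha_j$. Set $j_0 := \lceil C\log k\rceil$ for an absolute constant $C$ to be fixed. I will build a decreasing chain of subintervals
\[
I_{j_0-1}\supset I_{j_0}\supset\cdots\supset I_k
\]
of $[1/(2k),1/k]$ with the invariant that every $\theta\in I_j$ satisfies $\|\alpha_i\theta\|_{\mathbb{T}}\sim i/k$ for all $j_0\le i\le j$. Any $\theta_k$ in the final interval $I_k$ then lies in $[1/(2k),1/k]$, hence $\theta_k\sim 1/k$, and satisfies the conclusion.

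The inductive step is the heart of the argument. Suppose $I_{j-1}$ has been defined and $\alpha_j|I_{j-1}|\gg 1$. Then the map $\theta\mapsto\alpha_j\theta$ wraps around $\mathbb{R}/\mathbb{Z}$ roughly $\alpha_j|I_{j-1}|$ times on $I_{j-1}$, so the ``good'' set
\[
G_j := \Big\{\theta\in I_{j-1}:\ \|\alpha_j\theta\|_{\mathbb{T}}\in [c_1\, j/k,\ c_2\, j/k]\Big\}
\]
is a disjoint union of component intervals, each of length $\sim j/(k\alpha_j)$, with total measure $\sim (j/k)|I_{j-1}|$. Declare $I_j$ to be any one such component, so $|I_j|\sim j/(k\alpha_j)$.

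Propagating the induction to step $j+1$ requires $\alpha_{j+1}|I_j|\gg 1$, and the growth hypothesis supplies
\[
\alpha_{j+1}|I_j|\ \gtrsim\ \frac{\alpha_{j+1}}{\alpha_j}\cdot\frac{j}{k}\ \ge\ \frac{2^j\, j}{k},
\]
which exceeds any fixed threshold as soon as $j\ge C\log k$ with $C$ a sufficiently large absolute constant. This both powers the induction from $j=j_0$ up to $j=k$ and pins down the lower cutoff in the statement; the base case $j=j_0$ is handled by $\alpha_{j_0}\ge 2^{j_0(j_0-1)/2}\gg k$.

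The main subtlety is the calibration of the universal constants $c_1<c_2$: one needs $[c_1 j/k, c_2 j/k]\subset[0,1/2]$ for every $j\le k$ (hence $c_2\le 1/2$), one needs $G_j$ to be nonempty at each step, and one needs the component $I_j$ to remain long enough to keep wrapping many times at the next scale. The exponential ratio $\alpha_{j+1}/\alpha_j\ge 2^j$ leaves a very wide margin for this bookkeeping to close with universal constants; the threshold $C\log k$ is tight only in so far as $2^j j/k$ must dominate a fixed constant.
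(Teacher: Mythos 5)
Your nested-interval construction is correct: the invariant propagates because $I_j\subset I_{i}$ for all $i\le j$ forces every $\theta\in I_k$ to lie in each good set $G_i$, the wrapping count $\alpha_j|I_{j-1}|\gtrsim 2^{j-1}(j-1)/k$ (and $\alpha_{j_0}\gg k$ at the base) guarantees a full component of $G_j$ of length $\sim j/(k\alpha_j)$ inside $I_{j-1}$, and the final point lies in $[1/(2k),1/k]$, so all the stated asymptotics hold with absolute constants. The paper runs the same induction on $j$ from $\lceil C\log k\rceil$ to $k$, powered by the same gap $\alpha_{j+1}/\alpha_j\ge 2^j$, but with different bookkeeping: it tracks a single point $\theta$, at step $j$ nudging it by at most $1/\alpha_j$ so as to hit a prescribed value of $\|\alpha_j\theta\|_{\mathbb{T}}$ exactly, and then controls the disturbance to the earlier scales by a geometric series of drift terms $\alpha_i/\alpha_{i+1}+\alpha_i/\alpha_{i+2}+\cdots\lesssim 2^{-C\log k}$. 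Your version trades that accumulated-error estimate for a measure/pigeonhole count at each scale: containment of intervals preserves the earlier constraints automatically, so no drift needs to be tracked, at the mild cost of working with target bands $[c_1 j/k, c_2 j/k]$ rather than exact values. Both arguments locate the threshold $\lceil C\log k\rceil$ in the same place — it is exactly where $2^j$ beats $k$ (up to the factor $j$) — so the two proofs are equivalent in strength; yours is arguably a cleaner way to organize the constants, while the paper's produces an explicit recursive formula for $\theta_k$.
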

\begin{proof}
Note $\alpha _j \geq 2^{j-1} $.  Choose an absolute constant $C$ large enough so that there exists $\theta =:\theta_{\lceil C \log(k) \rceil} \geq 1/k$ satisfying $0 \leq \theta_{\lceil C \log(k) \rceil}- 1/k \leq  1/\alpha_{\lceil C \log(k)\rceil} \leq 1/(8k)$ and $|| \alpha_{\lceil C\log(k) \lceil} \theta_{\lceil C \log(k) \rceil} ||_{\mathbb{T}}= 1/k$. Next, choose $\theta_2$ so that $|| \alpha_{\lceil C \log(k) \rceil+1} \theta_{\lceil C \log(k) \rceil+1}||_{\mathbb{T}} = 2/k$ and $0 \leq \theta_2 - \theta_1 \leq 1/ \alpha_{\lceil C  \log(k) \rceil+1} $. Observe that 

\begin{eqnarray*}
\left| || \alpha_{\lceil C \log(k) \rceil}\theta_{\lceil C\log(k) \rceil+1}  ||_\mathbb{T} - || \alpha_{\lceil C \log(k) \rceil} \theta_{\lceil C \log(k) \rceil}||_\mathbb{T} \right| &\leq& || \alpha_{\lceil C \log(k) \rceil} (\theta _{\lceil C \log(k) \rceil  +1} - \theta_{\lceil C \log(k) \rceil }) ||_\mathbb{T} \\&\leq& 2^{-C \log(k)}. 
\end{eqnarray*}
Next, choose $\theta_{\rceil C\log(k)  \rceil+2}$ satisfying $||\alpha_{ \lceil C  \log(k) \rceil +2} \theta_{\lceil C \log(k) \rceil +2}||_\mathbb{T} =3/k$ and such that $0 \leq \theta_3 - \theta_2 \leq  1/ \alpha_{\lceil C  \log(k) \rceil+2}$. This ensures 

\begin{eqnarray*}
&& \left| || \alpha_{\lceil C\log(k) \rceil } \theta _{\lceil C \log(k) \rceil+2} || _\mathbb{T} - || \alpha_{\lceil C \log(k) \rceil }  \theta _{\lceil C \log(k) \rceil} || _\mathbb{T} \right| \\&\leq&  || \alpha_{\lceil C  \log(k)\rceil} (\theta _{\lceil C \log(k) \rceil  +2}-\theta_{\lceil C \log(k)\rceil }) || _\mathbb{T}\\ & \leq & || \alpha_{\lceil C  \log(k) \rceil} (\theta _{\lceil C\log(k) \rceil +2}-\theta_{\lceil C \log(k) \rceil +1}) || _\mathbb{T}+ || \alpha_{\lceil C  \log(k) \rceil } (\theta _{\lceil C \log(k)\rceil  +1}-\theta_{\lceil C\log(k)\rceil }) || _\mathbb{T} \\ &<& \frac{ \alpha_{\lceil C  \log(k) \rceil }}{ \alpha_{\lceil C \log(k) \rceil  +2}} + \frac{\alpha_{\lceil C  \log(k)\rceil }}{\alpha_{\lceil C  \log(k) \rceil +1}} \\ &\leq&   2^{-2C \log (k)  -1}  + 2^{-C \log (k)}. 
\end{eqnarray*}
Iterating this construction yields $\theta_k \sim 1/k$ so that $|| \alpha_j \theta_k||_\mathbb{T} \sim j/k$ for all $\lceil C \log(k)  \rceil \leq j \leq k$. 
\end{proof}
\begin{reptheorem}{P3}
Let $\hat{\eta} \in C^\infty ([-1/2,1/2])$ satisfy $\eta \gtrsim 1_{[-1,1]}$.  Let $\Sigma \subset \mathbb{R}$ satisfy $\#\left\{ \Sigma \right\}=\infty$.  Then

\begin{eqnarray*}
\mathcal{M}^\Sigma_\eta : (f, g) \mapsto \sup_{\sigma \in \Sigma} \left| \sum_{ \tau \in \mathbb{Z}} f*\mathcal{F}^{-1} \left[ \hat{\eta}_{I^{\sigma}_\tau} \right] g * \mathcal{F}^{-1} \left[ \hat{\eta}_{I^{\sigma}_0}\right] \right|
\end{eqnarray*}
maps $W_{p_1}(\mathbb{R}) \times L^{p_2}(\mathbb{R}) \rightarrow L^{p_1p_2/(p_1 + p_2)}(\mathbb{R})$ iff $p_1 = \infty, 1 < p_2 \leq \infty$.
 \end{reptheorem}

\begin{proof}
Fix $k_0 \in \mathbb{N}$. By scaling and translation invariance, we may assume $\sigma \geq 2$ for $k_0$ many $\sigma \in \Sigma$ which satisfy $2 = \sigma_1 \leq 2^{-1} \cdot \sigma_2 \leq 2^{-1-2} \sigma_3 \leq ... \leq 2^{-1-2- ... - k_0} \sigma_{k_0}$. That is,  $\sigma_j \leq 2^{-j} \sigma_{j+1}$ for all $j \in \{1, ..., k_0-1\}$.  Denote this collection by $\Sigma_{k_0}$. Let $\Sigma^{-1}_{k_0}$ be the collection of reciprocals in $(0,1/2]$ so that $\sigma_{k_0} \Sigma_{k_0}^{-1}$ satisfies the conditions of Lemma \ref{HL}. Therefore, invoke Lemma \ref{HL} to find $\theta_{k_0}$. Set $g(x) = \phi (x):= \langle x \rangle ^{-10} $ together with $f_{k_0}(x) = \sum_{| \tau| \leq \lceil \sigma_{k_0} \theta_{k_0} \rceil } \phi(\sigma_{k_0}(x- \tau /(\sigma_{k_0}\theta_{k_0})))$. Let $g= \phi$.   For each $j : C\log(k_0) \leq j \leq k_0$,  there exists $\tau:0 \leq  \tau \leq \lceil \sigma_{k_0} \theta_{k_0} \rceil $ so that $0 \leq \alpha_j \theta_{k_0}  - \tau< 1$. Then $\sigma_{k_0} \sigma_{k_0 - j +1}^{-1}:=\alpha_j  = \frac{j}{k_0 \theta_{k_0}}    +\frac{\tau}{ \theta_{k_0}} + O(1)$.  Let  $S_{k_0}: = \bigcup_{0 \leq \tau \leq \lceil \sigma_{k_0} \theta_{k_0} \rceil}\bigcup_{ C\log(k_0) \leq j \leq k_0}  \left\{\frac{  j }{\sigma_{k_0} k_0 \theta_{k_0}} + \frac{ \tau }{\sigma_{k_0}\theta_{k_0}} \right\}$ and $\mathcal{S}_{k_0} := \left\{ x: dist(x ,S_{k_0} ) \leq 1/ \sigma_{k_0} \right\}$.  Then $\mathcal{M}_\eta (f_{k_0}, g) \gtrsim 1_{\mathcal{S}_{k_0}}$ and $|\mathcal{S}_{k_0}| \simeq 1$, yet $||\hat{f}_k||_{p_1^\prime} \simeq k^{-1/p_1}$ and $||g||\simeq 1$. 
\end{proof}

 \section{Estimates for Periodically Discretized Hilbert Transforms in the Plane}
As we have seen, symbols $m: \mathbb{R}^2 \rightarrow \mathbb{C}$ adapted to the singularity $\Gamma = \{\xi_1 + \xi_2 =0\}$ in the Mikhlin-H\"ormander sense that
 
 \begin{eqnarray*}
\left |\partial^{\vec{\alpha}} m(\vec{\xi})\right| \lesssim_{\vec{\alpha}} \frac{1}{dist(\vec{\xi}, \Gamma)^{|\vec{\alpha}|}}
 \end{eqnarray*}
 for arbitrarily many multi-indices $\vec{\alpha}$ need not be bounded operators on any $L^p(\mathbb{R})$ spaces. Of course, there are non-trivial multipliers obeying the above inequality such as $H_2:(f,g) \mapsto H(f \cdot g)$, which clearly satisfy all Banach $L^p$ estimates. Our next result says that any bilinear multiplier consisting of frequency localized pieces arranged in a Whitney decomposition with respect to the singular line $\{\xi_1 + \xi_2 =0\}$ with the additional property that at each scale its frequency projections are equally-spaced, translated copies of each other maps $L^{p_1}(\mathbb{R}) \times L^{p_2}(\mathbb{R}) \rightarrow L^{p_1 p_2 / (p_1 + p_2)}(\mathbb{R})$ for all $2 \leq  p_1, p_2 <\infty$ satisfying $\frac{1}{p_1}+\frac{1}{p_2} <1$. 
  \begin{reptheorem}{P4}
Fix $ \hat{\eta} \in C^{\infty} ([-1/2, 1/2])$. Let $m : \mathbb{R}^2 \rightarrow \mathbb{R}$ be given by

\begin{eqnarray*}
m(\xi_1, \xi_2) = \sum_{\vec{P} \in \mathbb{P}} \hat{\eta}_{P_1}(\xi_1)  \hat{\eta}_{P_2}(\xi_2)
\end{eqnarray*}
where

\begin{eqnarray*}
\mathbb{P} := \bigcup_{k \in \mathbb{Z}} \bigcup_{m \in \mathbb{Z}}\left\{  \left[m2^{-k}-2^{-k-1}, m2^{-k} + 2^{-k-1}\right] \times  \left[m2^{-k}-2^{-k-1}+\Gamma 2^{-k} , m2^{-k} + 2^{-k-1}+\Gamma 2^{-k}\right]  \right\}.
\end{eqnarray*}
Then, for every pair $(p_1, p_2)$ such that $2 \leq p_1, p_2 <\infty$ with $\frac{1}{p_1} + \frac{1}{p_2} <1$

\begin{eqnarray*}
|| T_m(f_1, f_2) || _{\frac{p_1 p_2 }{p_1 + p_2}} \lesssim_{\vec{p}} || f_1||_{p_1} || f_2||_{p_2}.
\end{eqnarray*}
\end{reptheorem}

\begin{proof}
We may assume that $|\vec{P}| \leq 1$ for all $\vec{P} \in \mathbb{P}$ using scaling invariance and a standard limiting argument. In addition, we may assume $f_1, f_2 \in \mathcal{S}(\mathbb{R})$ by density.   Next, by expanding each function in Fourier series on each translated smooth interval $I_n= [n-1/2, n+1/2]$, we face for $\bar{1}_{I_n}(x):= \bar{1}(x-n)$ for some $\bar{1} \in \mathcal{S}(\mathbb{R})$ satisfying $1_{[-1/4,  1/4]} \leq \bar{1} \leq 1_{[3/4, 3/4]}$

\begin{eqnarray*}
f_1(x) &=& \sum_{n \in \mathbb{Z}} f_1(x) \bar{1}_{I_n}(x) =  \sum_{n \in \mathbb{Z}} \sum_{ \mu \in \mathbb{Z}} c_{1,n}^\mu e^{2 \pi i \frac{2}{3} \mu (x-n)}\bar{1}_{I_{n}} (x)  \\ 
f_2(x) &=&  \sum_{n \in \mathbb{Z}} f_2(x) \bar{1}_{I_n}(x) =  \sum_{n \in \mathbb{Z}} \sum_{ \mu \in \mathbb{Z}} c_{2,n}^\mu e^{2 \pi i \frac{2}{3} \mu (x-n)}\bar{1}_{I_{n}} (x) .
\end{eqnarray*}
By standard L-P equivalence, we have that

\begin{eqnarray*}
\left| \left|   \sum_{k \geq 0} \sum_{|\vec{P}| = 2^{-k}} f_1  * \eta_{P_1} f_2 * \eta_{P_2}  \right| \right|_{\frac{p_1 p_2}{p_1 + p_2}} \simeq \left| \left|\left(  \sum_{k \geq 0} \left| \sum_{|\vec{P}| = 2^{-k}} f_1  * \eta_{P_1} f_2 * \eta_{P_2} \right|^2 \right)^{1/2} \right| \right|_{\frac{p_1 p_2}{p_1 + p_2}}.
\end{eqnarray*}
 At this stage, it is helpful to introduce mollify the frequency projections. Motivation for this approach is mainly technical and should become clearer in the course of the proof. By introducing more smoothness in frequency we expect to extract more decay in time. 
 
 Consider the periodic tent function $T$ defined to be $1-2|x|$ for $|x| \leq 1/2$ and $T(x)=T(y)$ for any $x, y \in \mathbb{R}$ such that $x-y\in \mathbb{Z}$, i.e. T is the periodic extension with period 1 of the tent function to all of $\mathbb{R}$. Also, construct the shifted periodic  tent function $\tilde{T}:= T(x+1/2)$. It is simple matter to see $T(x) + \tilde{T}(x) = 1$ for all $x \in \mathbb{R}$. Define for any $\vec{Q} = (Q_1, Q_2)$

\begin{eqnarray*}
a_{\vec{Q}}&=& T(c_{Q_1}-|Q|/2) \\ 
b_{\vec{Q}} &=& \tilde{T}(c_{Q_1}-|Q|/2).
\end{eqnarray*} 
By construction, $a_{\vec{Q}} + b_{\vec{Q}} =1~ \forall \vec{Q}$. Because of the triangle inequality, it suffices to prove separate estimates for

\begin{eqnarray*}
&&\left| \left|\left(  \sum_{k \geq 0} \left| \sum_{|\vec{P}| = 2^{-k}} a_{\vec{P}} f_1  * \mathcal{F}^{-1} \left[ \hat{\eta}_{P_1} \right]  f_2 * \mathcal{F}^{-1} \left[ \hat{\eta}_{P_2}\right]  \right|^2 \right)^{1/2} \right| \right|_{\frac{p_1 p_2}{p_1 + p_2}} \\ \text{and}&&\left| \left|\left(  \sum_{k \geq 0} \left| \sum_{|\vec{P}| = 2^{-k}} b_{\vec{P}} f_1  * \mathcal{F}^{-1} \left[ \hat{\eta}_{P_1}\right]  f_2 *\mathcal{F}^{-1} \left[ \hat{ \eta}_{P_2} \right]\right|^2 \right)^{1/2} \right| \right|_{\frac{p_1 p_2}{p_1 + p_2}} .
\end{eqnarray*}
So, consider the $b_{\vec{P}}$ part of the sum. Fix $\bar{1}^\prime \in C^{\infty}_{[-1/2, 3/2]}$ satisfying $1_{[0,1]} \leq \bar{1}^\prime \leq 1_{[-1/2, 3/2]}$ and set $\bar{1}^\prime_{I_m} (x):= \bar{1}^\prime(x-m)$.  Compute for $c_{P_1} = m +(\gamma+1/2)2^{-k}$ and $0 \leq \gamma <2^{k}$ using Fourier series on the interval $[m-1/2, m+3/2]$

\begin{eqnarray*}
 \mathcal{F} \left( \left(\bar{1}_{I_0}(\cdot-n) e^{2 \pi i \frac{2}{3} \mu (\cdot -n)}\right) * \eta_{P_1} \right) (\xi)&=& \hat{\bar{1}} _{I_0} \left(\xi- \frac{2}{3} \mu\right)e ^{-2 \pi i n \xi} \cdot\hat{\eta}_{P_1}(\xi) \\ &=&  \hat{\bar{1}} _{I_0} \left(\xi- \frac{2}{3} \mu\right) \bar{1}^\prime_{I_{m}}(\xi) e ^{-2 \pi i n \xi} \cdot\hat{\eta}_{P_1}(\xi)  \\ &=& \sum_{\lambda \in \mathbb{Z}} d^\lambda_{m, \mu} e^{2 \pi i \frac{1}{2} \lambda \xi} \bar{1}^\prime_{I_{m}}(\xi) e ^{-2 \pi i n \xi} \cdot\hat{\eta}_{P_1}(\xi) \\ &=& \sum_{\lambda \in \mathbb{Z}} d^\lambda_{m, \mu} e^{2 \pi i (\frac{ \lambda}{2} -n)\xi} \cdot\hat{\eta}_{P_1}(\xi) ,
\end{eqnarray*}
where $|d^\lambda_{m , \mu}| \lesssim \frac{1}{(1+|\lambda|^N)(1+ | m - \frac{2}{3} \mu |^N)}$. It follows immediately that

\begin{eqnarray*}
\left[ \tilde{1}_{I_0}(\cdot-n) e^{2 \pi i \frac{2}{3} \mu (\cdot -n)}\right] * \eta_{P_1} &=& \sum_{\lambda \in \mathbb{Z}} d^\lambda_{m, \mu}  \cdot \eta_{P_1}\left(x-n+\frac{\lambda}{2}\right) \\&=& \sum_{\lambda \in \mathbb{Z}} d^\lambda_{m, \mu} 2^{-k}  \eta\left(\left(x-n+\frac{\lambda}{2}\right)2^{-k}\right) e^{2 \pi i (m + ( \gamma+1/2) 2^{-k})(x-n + \frac{\lambda}{2})},
\end{eqnarray*}
Putting it all together yields

\begin{eqnarray*}
f_1 *\mathcal{F}^{-1} \left[ \hat{\eta}_{P_1}\right] (x) &=& \sum_{n \in \mathbb{Z}} \sum_{\mu \in \mathbb{Z}} \sum_{\lambda \in \mathbb{Z}} c_{1,n } ^\mu d^\lambda_{m, \mu} 2^{-k}  \eta \left(\left(x-n+\frac{\lambda}{2}\right)2^{-k}\right) e^{2 \pi i (m + (\gamma+1/2) 2^{-k})(x-n + \frac{\lambda}{2})} \\
f_2*\mathcal{F}^{-1} \left[ \eta_{P_2}\right] (x) &=& \sum_{n \in \mathbb{Z}} \sum_{\mu \in \mathbb{Z}} \sum_{\lambda \in \mathbb{Z}} c_{2,n } ^\mu d^\lambda_{m, \mu} 2^{-k}  \eta\left(\left(x-n+\frac{\lambda}{2}\right)2^{-k}\right) e^{2 \pi i (-m - (\gamma+1/2) 2^{-k}+\Gamma 2^{-k})(x-n + \frac{\lambda}{2})}.
\end{eqnarray*}
Furthermore, using $\tilde{1}_{2^k I_{\theta}}(x) := \eta((x-\theta)2^{-k})$,
\begin{eqnarray*}
&& \left|  \sum_{|\vec{P}| = 2^{-k}}b_{\vec{P}} f_1 * \mathcal{F}^{-1} \left[ \hat{\eta}_{P_1}\right] (x) f_2 *\mathcal{F}^{-1} \left[ \hat{ \eta}_{P_2}\right] (x)\right|  \\&=&\left| \sum_{ m \in \mathbb{Z}} \sum_{0 \leq \gamma < 2^{k}} \sum_{n_1, n_2, \mu_1, \mu_2, \lambda _1, \lambda _2 \in \mathbb{Z}}2^{-2k}c_{1, n_1}^{\mu_1} c_{2, n_2}^{\mu_2} d^{\lambda_1}_{m, \mu_1} d^{\lambda _2} _{m, \mu_2}   \tilde{1}_{2^k I_{n_1-\frac{\lambda_1}{2}}}(x) \tilde{1}_{2^k I_{n_2 - \frac{\lambda_2}{2}}}(x)\right. \\&\times&\left.  \tilde{T}(m+\gamma2^{-k}) e^{2 \pi i (m+  (\gamma+1/2) 2^{-k})(n_2-n_1 + \frac{(\lambda_1 - \lambda _2)}{2})} e^{ 2 \pi i \Gamma 2^{-k} (n_2-\frac{\lambda_2}{2})}\right|\\ &=&  \left| \sum_{ m \in \mathbb{Z}}  \sum_{n_1, n_2, \mu_1, \mu_2, \lambda _1, \lambda _2 \in \mathbb{Z}}2^{-2k}c_{1, n_1}^{\mu_1} c_{2, n_2}^{\mu_2} d^{\lambda_1}_{m, \mu_1} d^{\lambda _2} _{m, \mu_2}   \tilde{1}_{2^k I_{n_1-\frac{\lambda_1}{2}}}(x) \tilde{1}_{2^k I_{n_2 - \frac{\lambda_2}{2}}}(x)  \right. \\&\times&\left.  \left[ \sum_{0 \leq \gamma < 2^{k}}\tilde{T}(m+\gamma2^{-k}) e^{2 \pi i   \gamma 2^{-k}(n_2-n_1 + \frac{(\lambda_1 - \lambda _2)}{2})} \right]e^{2 \pi i (m+2^{-k-1})(n_2-n_1 +\frac{(\lambda _1 - \lambda_2)}{2})}e^{ -2 \pi i \Gamma 2^{-k} (n_2-\frac{\lambda_2}{2})}\right|.
\end{eqnarray*}
The sum over $\gamma$ is

\begin{eqnarray*}
\sum_{0 \leq \gamma <2^k} \tilde{T}(m+\gamma 2^{-k}) e^{ 2 \pi i \gamma 2^{-k} (n_2 -n_1 + \frac{(\lambda _1 - \lambda_2)}{2})} &=& \sum_{0 \leq \gamma <2^k} \tilde{T}(\gamma 2^{-k}) e^{ 2 \pi i \gamma 2^{-k} (n_2 -n_1 + \frac{(\lambda _1 - \lambda_2)}{2})} \\ &=&  \sum_{-2^{k-1} \leq \gamma <2^{k-1}} 
(1-2|\gamma 2^{-k}|) e^{ 2 \pi i \gamma 2^{-k} (n_2 -n_1 + \frac{(\lambda _1 - \lambda_2)}{2})} e^{2 \pi i \frac{1}{2} (n_2-n_1 + \frac{\lambda_1 - \lambda_2}{2})}\\ &=& F_{2^k} \left(2^{-k} \left(n_2-n_1 +\frac{(\lambda_1 - \lambda_2)}{2}\right)\right)  e^{2 \pi i \frac{1}{2} (n_2-n_1 + \frac{\lambda_1 - \lambda_2}{2})}.
\end{eqnarray*}
Therefore, 

\begin{eqnarray*}
&&\left|  \sum_{|\vec{P}| = 2^{-k}}b_{\vec{P}} f_1 *\mathcal{F}^{-1} \left[  \hat{\eta}_{P_1}\right](x) f_2 * \mathcal{F}^{-1} \left[ \hat{\eta}_{P_2}\right](x) \right| \\ &=&\left|   \sum_{ m \in \mathbb{Z}}  \sum_{n_1, n_2, \mu_1, \mu_2, \lambda _1, \lambda _2 \in \mathbb{Z}}2^{-2k}c_{1, n_1}^{\mu_1} c_{2, n_2}^{\mu_2} d^{\lambda_1}_{m, \mu_1} d^{\lambda _2} _{m, \mu_2}   \tilde{1}_{2^k I_{n_1-\frac{\lambda_1}{2}}}(x) \tilde{1}_{2^k I_{n_2 - \frac{\lambda_2}{2}}}(x)  \right. \\&\times& \left. F_{2^k} \left(2^{-k} (n_2-n_1 +\frac{(\lambda_1 - \lambda_2)}{2})\right) e^{2 \pi i (m+\frac{1}{2}+2^{-k-1})(n_2-n_1 +\frac{(\lambda _1 - \lambda_2)}{2})}e^{- 2 \pi i \Gamma 2^{-k} (n_2-\frac{\lambda_2}{2})}\right|.
\end{eqnarray*}
The sum containing coefficients $a_{\vec{P}}$ is handled similarly as the sum containing coefficients $b_{\vec{P}}$, so the details are omitted. The expression one derives in this case is 

\begin{eqnarray*}
&& \left| \sum_{|\vec{P}|=2^{-k}} a_{\vec{P}} f_1*\mathcal{F}^{-1} \left[ \hat{\eta}_{P_1}\right] (x) f_2*\mathcal{F}^{-1} \left[ \eta_{P_2}\right] (x) \right| \\ &=& \left|  \sum_{ m \in \mathbb{Z}}  \sum_{n_1, n_2, \mu_1, \mu_2, \lambda _1, \lambda _2 \in \mathbb{Z}}2^{-2k}c_{1, n_1}^{\mu_1} c_{2, n_2}^{\mu_2} \tilde{d}^{\lambda_1}_{m, \mu_1} \tilde{d}^{\lambda _2} _{m, \mu_2}   \tilde{1}_{2^k I_{n_1-\frac{\lambda_1}{2}}}(x) \tilde{1}_{2^k I_{n_2 - \frac{\lambda_2}{2}}}(x)  \right. \\&\times& \left. F_{2^k} \left(2^{-k} (n_2-n_1 +\frac{(\lambda_1 - \lambda_2)}{2})\right) e^{2 \pi i (m+2^{-k-1})(n_2-n_1 +\frac{(\lambda _1 - \lambda_2)}{2})}e^{ -2 \pi i \Gamma 2^{-k} (n_2-\frac{\lambda_2}{2})}\right|,
\end{eqnarray*}
 where $\tilde{d}^\lambda_{m, \mu}$ satisfies the same decay properties as $d^\lambda_{m, \mu}$, i.e. $|\tilde{d}^\lambda_{m, \mu}| \lesssim_N \frac{1}{(1+|\lambda|^N)(1+|m-\frac{2\mu}{3}|^N)}$. Now it suffices to make use of the point-wise bound for the Fej\'er kernel: 

\begin{eqnarray*}
\left| F_{2^k} \left( 2^{-k} \left(n_2-n_1 + \frac{(\lambda_1 - \lambda _2)}{2}\right)\right)\right| \lesssim \frac{2^k}{1 + \left| n_2-n_1 + \frac{(\lambda_1 - \lambda _2)}{2}\right|^2}
\end{eqnarray*}
assuming $|n_2-n_1 + \frac{\lambda_1 - \lambda _2}{2}| \lesssim 2^k$. By $2^k$-periodicity, we have the same bound for $|n_2-n_1 + \frac{ \lambda_1 - \lambda _2}{2}| \simeq \kappa 2^k$:

\begin{eqnarray*}
\left| F_{2^k} \left( 2^{-k} \left(n_2-n_1 + \frac{(\lambda_1 - \lambda _2)}{2}\right)\right)\right| \lesssim \frac{2^k}{1 + \left| n_2-n_1 + \frac{(\lambda_1 - \lambda _2)}{2}-\kappa2^k \right|^2}.
\end{eqnarray*}

\subsection{Dominant Contribution}
The dominant contribution to the sum of  frequency projections over a given scale arises from the terms corresponding to $\frac{2}{3}\mu_1 \simeq \frac{2}{3} \mu_2 \simeq m$, along with $\lambda _1 = \lambda _2 =0$ and $\kappa=0$. Under these assumptions, the sum over $m,n_1, n_2: |n_1-n_2| <2^{k-1}$ is majorized by

\begin{eqnarray*}
 2^{-2k}  d^0_{m,m} d^0_{m,m}c_{1, n_1}^{m}\tilde{1}_{2^kI_{n_1}}(x)  \cdot c_{2, n_2}^{m}  \tilde{1}_{2^k I_{n_2 }}(x) e^{ 2 \pi i (m+ 2^{-k-1} )\left(n_2 -n_1 \right)} e^{-2 \pi i \Gamma 2^{-k} n_2} F_{2^k} \left(2^{-k} (n_1-n_2)\right).
\end{eqnarray*}	
Furthermore, if $n_1=n_2$, we face

\begin{eqnarray*}
&& \left| \sum_{ m, n_1 \in \mathbb{Z}}  2^{-k}d^0_{m,m} d^0_{m,m} c_{1, n_1}^{m}c_{2, n_1}^{m} \tilde{1}_{2^kI_{n_1}} (x)   \tilde{1}_{2^k I_{n_1 }}(x) e^{-2 \pi i \Gamma 2^{-k} n_1}\right| =~~\left| \left[ \sum_{ m,n_1 \in \mathbb{Z}} \left[ d^0_{m,m} \right]^2 c_{1, n_1}^{m}c_{2, n_1}^{m} \tilde{\tilde{1}}_{I_{n_1}}\right] *\tilde{\psi}_k(x) \right|,
\end{eqnarray*}
where $\tilde{\tilde{1}}_{I}(x):= \tilde{\tilde{1}}(x-c_I)$,  $\tilde{\tilde{1}} \in \mathcal{S}(\mathbb{R})$ satisfies $1_{[0,1]} \leq  \mathcal{F} \left( \tilde{\tilde{1}}\right) \leq 1_{[-1, 2]}$, and $\tilde{\psi}_k(x):= 2^{-k} \tilde{\psi}(2^{-k} x)$ where $\tilde{\psi}(x):=\tilde{1}^2(x) e ^{2 \pi i \Gamma x}$. 
Therefore, we are able to compute the $L^p$ norm of this dominant expression as

\begin{eqnarray*}
&& \left|\left| \left(\sum_{k \geq 0} \left| \left[ \sum_{ m,n_1 \in \mathbb{Z}} \left[ d^0_{m,m} \right]^2  c_{1, n_1}^{m}c_{2, n_1}^{m}\tilde{\tilde{1}}_{I_{n_1}}\right] *\tilde{\psi}_k \right|^2 \right)^{1/2} \right|\right|_\frac{p_1 p_2}{p_1 + p_2}\\& \lesssim& \left|\left|\sum_{ m,n_1 \in \mathbb{Z}} \left[ d^0_{m,m} \right]^2 c_{1, n_1}^{m}c_{2, n_1}^{m}\tilde{\tilde{1}}_{I_{n_1}} \right|\right|_\frac{p_1 p_2}{p_1 + p_2} \\ &\leq& \left|\left| \sum_{n_1\in \mathbb{Z}} \left(\sum_{m \in \mathbb{Z}}| c_{1, n_1}^{m}|^2 \right)^{1/2} \left( \sum_{ m \in \mathbb{Z}} |c_{2, n_1}^{m}|^2 \right)^{1/2} \left| \tilde{\tilde{1}}_{I_{n_1}} \right|\right|\right|_\frac{p_1 p_2}{p_1 + p_2} \\ &\leq&  \left|\left|\left(  \sum_{n_1\in \mathbb{Z}} \left(\sum_{m \in \mathbb{Z}}| c_{1, n_1}^{m}|^2 \right)^{1/2} \left|  \tilde{\tilde{1}}_{I_{n_1}} \right|\right)\left( \sum_{n_2 \in \mathbb{Z}} \left( \sum_{ m \in \mathbb{Z}} |c_{2, n_1}^{m}|^2 \right)^{1/2} \left| \tilde{\tilde{1}}_{I_{n_2}}\right| \right|\right|\right|_\frac{p_1 p_2}{p_1 + p_2} \\ &\leq& \left|\left|  \sum_{n_1\in \mathbb{Z}} \left(\sum_{m \in \mathbb{Z}}| c_{1, n_1}^{m}|^2 \right)^{1/2} \left|  \tilde{\tilde{1}}_{I_{n_1}}\right|  \right| \right|_{p_1} \left|  \left| \sum_{n_2 \in \mathbb{Z}} \left( \sum_{ m \in \mathbb{Z}} |c_{2, n_1}^{m}|^2 \right)^{1/2} \left| \tilde{\tilde{1}}_{I_{n_2}} \right| \right|\right|_{p_2} \\  &\simeq& \left( \sum_{n_1 \in \mathbb{Z}} \left( \sum_{m \in \mathbb{Z}} |c^m_{1, n_1}|^2 \right)^{p_1/2} \right)^{1/p_1} \left( \sum_{n_2 \in \mathbb{Z}} \left( \sum_{m \in \mathbb{Z}} |c^m_{2, n_2}|^2 \right)^{p_2/2} \right)^{1/p_2} \\ &\leq& \left( \sum_{n_1 \in \mathbb{Z}}  || f_1 \bar{1}_{I_n}||_{p_1}^{p_1} \right)^{1/p_1} \left( \sum_{n_2 \in \mathbb{Z}} ||f_2\bar{1}_{I_{n_2}}||_{p_2}^{p_2} \right)^{1/p_2} \\  &\lesssim& || f_1||_{p_1} ||f_2||_{p_2}. 
\end{eqnarray*}
The assumption $2 \leq p_1, p_2 <\infty$ is necessary to observe $\left( \sum_{m_1 \in \mathbb{Z}} |c_{1, n_1}^{m_1}|^2 \right)^{1/2} \simeq || f_1 \bar{1}_{I_{n_1}} ||_2 \lesssim ||f_1 \bar{1}_{I_{n_1}}||_{p_1}$.

Before removing restrictions on the parameters $\mu_1, \mu_2, \lambda_1, \lambda_2, \kappa, n_1, n_2$, it is useful to observe
\begin{lemma}\label{ML}
Let $\tilde{1}\in \mathcal{S}(\mathbb{R})$ have compact Fourier support inside $[-1,1]$. Then, there exists $\tilde{\tilde{1}}\in \mathcal{S}(\mathbb{R})$ such that for all $L \in \mathbb{R}$

\begin{eqnarray*}
\tilde{1}_{2^kI_{\theta}} (x) \tilde{1}_{2^k I_{\theta +L}} (x) e^{2 \pi i \Gamma 2^{-k} (x-\theta-L)} = \frac{2^k}{1+(2^{-k}L)^2} \left(Tr_{\theta +L} \tilde{\tilde{1}} \right)* \psi_k^{L}(x)
\end{eqnarray*}
where $\psi_k ^{L}$ is lacunary at scale $2^k$ and uniformly Mikhlin in the parameter $L$.  
\end{lemma}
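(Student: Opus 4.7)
The plan is to identify the left-hand side as a modulated Schwartz bump at scale $2^k$, extract the quantitative decay factor $(1+(2^{-k}L)^2)^{-1}$ from the Schwartz decay of the product of two bumps separated by distance $L$ at scale $2^k$, and then express the result as a convolution by Fourier truncation against a fixed Schwartz bump $\tilde{\tilde{1}}$.

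\smallskip

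First I would substitute $\tilde{1}_{2^k I_\theta}(x) = \eta((x-\theta)2^{-k})$ and change variables by setting $u = 2^{-k}(x-\theta-L)$ and $\ell = 2^{-k}L$, which converts the LHS to $\eta(u+\ell)\eta(u)\, e^{2\pi i \Gamma u}$. Introducing
\begin{eqnarray*}
G_\ell(u) := (1+\ell^2)\,\eta(u+\ell)\,\eta(u)\, e^{2\pi i \Gamma u},
\end{eqnarray*}
the main technical step is to verify that the family $\{G_\ell\}_{\ell \in \mathbb{R}}$ is uniformly Schwartz in $\ell$. This follows from the pointwise bound $|\eta(u+\ell)\eta(u)| \lesssim_N (1+|u|)^{-N}(1+|u+\ell|)^{-N}$ combined with the elementary inequality $(1+|u|)(1+|u+\ell|) \gtrsim 1+|\ell|$, which lets one absorb the prefactor $(1+\ell^2)$ while retaining Schwartz decay in $u$ uniformly in $\ell$; derivatives of $G_\ell$ are handled identically via Leibniz's rule.

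\smallskip

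Because $\hat{\eta}$ is supported in $[-1,1]$, the Fourier support of $G_\ell$ lies in $[\Gamma-2, \Gamma+2]$ uniformly in $\ell$. Setting $\psi_k^L(y) := 2^{-k}\, G_{2^{-k}L}(y\, 2^{-k})$ produces a wavelet at scale $2^k$ whose Fourier transform is supported in $[(\Gamma-2)2^{-k}, (\Gamma+2)2^{-k}]$ (hence lacunary at scale $2^k$), and the uniform Schwartz bounds on $G_\ell$ translate directly into Mikhlin estimates on $\psi_k^L$ that are uniform in $L$. Undoing the change of variables, the left-hand side equals $\frac{2^k}{1+(2^{-k}L)^2}\,\psi_k^L(x-\theta-L)$.

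\smallskip

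To finish, pick any fixed $\tilde{\tilde{1}} \in \mathcal{S}(\mathbb{R})$ with $\hat{\tilde{\tilde{1}}} \equiv 1$ on $[-(\Gamma+2), \Gamma+2]$; this is legitimate uniformly in $k \geq 0$, which is the range of interest after the reduction $|\vec{P}| \leq 1$ made at the outset of the proof of Theorem \ref{P4}. Since $\hat{\psi_k^L}$ is supported inside this interval, Fourier inversion gives $\tilde{\tilde{1}}*\psi_k^L = \psi_k^L$, so $(Tr_{\theta+L}\tilde{\tilde{1}})*\psi_k^L(x) = \psi_k^L(x-\theta-L)$, which combined with the previous display is the claimed identity. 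The hardest part will be the uniform-in-$\ell$ Schwartz and Mikhlin bookkeeping in the middle step, but the normalization $2^k/(1+(2^{-k}L)^2)$ appearing in the statement already pins down the correct decay factor and scaling.
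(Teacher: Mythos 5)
Your proof is correct and is essentially the paper's argument: you end up with the same $\psi_k^L$ (whose Fourier transform is $(1+(2^{-k}L)^2)\bigl[\hat{\tilde{1}}e^{2\pi i 2^{-k}L\cdot}*\hat{\tilde{1}}\bigr](2^k\xi-\Gamma)$, exactly as in the paper's factorization) and the same choice of $\tilde{\tilde{1}}$, namely a Schwartz function whose Fourier transform is identically $1$ on the common frequency support, valid for the relevant range $k\ge 0$. The only difference is that where the paper absorbs the factor $(1+(2^{-k}L)^2)$ into the definition of $\hat{\psi}_k^L$ and leaves the uniform Mikhlin bounds implicit, you verify them explicitly on the physical side via the separated-bumps estimate $(1+|u|)(1+|u+\ell|)\gtrsim 1+|\ell|$, which is a worthwhile addition rather than a different route.
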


\begin{proof}
The result follows from a direct application of the Fourier transform:
\begin{eqnarray*}
2^{-k} \mathcal{F} (LHS) (\xi)&=& 2^{-k} \mathcal{F} \left[ \tilde{1}_{2^kI_{\theta}} (\cdot) \tilde{1}_{2^k I_{\theta +L}}(\cdot) e^{-2 \pi i \Gamma 2^{-k} (\cdot-L-\theta)}  \right] (\xi)\\ &=& 2^{-k} \mathcal{F} \left[ \tilde{1}_{2^k I_0} (\cdot +L) \tilde{1}_{2^k I_0}(\cdot ) e^{2 \pi i \Gamma 2^{-k} \cdot }\right] (\xi) e^{-2 \pi i \xi (L+\theta)} \\ &=& 2^{-k} \mathcal{F} \left[ \tilde{1}_{ I_0} (2^{-k} \cdot +2^{-k} L) \tilde{1}_{I_0}(2^{-k} \cdot) e^{2 \pi i \Gamma 2^{-k} \cdot}\right](\xi)  e^{-2 \pi i \xi (L+\theta)} \\ &=& \mathcal{F} \left[ \tilde{1}_{ I_0} (\cdot +2^{-k}L) \tilde{1}_{I_0}( \cdot) e^{2 \pi i \Gamma \cdot}\right] (2^k \xi) e^{-2 \pi i \xi (L+\theta)}\\ &=& \left[ \left( \hat{\tilde{1}}_{ I_0} (\cdot )e^{2 \pi i 2^{-k} L \cdot}\right) * \hat{ \tilde{1}}_{I_0}( \cdot)\right] (2^k\xi-\Gamma) e^{-2 \pi i \xi (L+\theta)} .
\end{eqnarray*}
The support of $\left[ \hat{\tilde{1}}_{ I_0} (\cdot )e^{2 \pi i 2^{-k} L \cdot} * \hat{ \tilde{1}}_{I_0}( \cdot)\right] (2^k\xi-\Gamma)$ is contained inside $[0, K]$ for $K \simeq 1$. Therefore, one may insert another function  $\Phi \in \mathcal{S}(\mathbb{R})$ (which is identically equal to one on $[0,K]$) into the last expression:

\begin{eqnarray*}
2^{-k}\mathcal{F}(LHS)(\xi)&=&\left[  \Phi (\xi) e^{-2 \pi i \xi (L+\theta)}\right]\left[ \hat{\tilde{1}}_{ I_0} (\cdot )e^{2 \pi i 2^{-k}L \cdot} * \hat{ \tilde{1}}_{I_0}( \cdot))\right] (2^k\xi-\Gamma) \\ &:=& \frac{1}{1+\kappa^2}\left[  \Phi (\xi) e^{-2 \pi i \xi (L+\theta)}\right] \hat{\psi}^{L}_k(\xi).
\end{eqnarray*}
Hence, 
\begin{eqnarray*}
LHS = \frac{2^k}{1+(L2^{-k})^2}\left( Tr_{\theta+L} \check{\Phi}\right) * \psi^{L}_k=RHS,
\end{eqnarray*}
provided we set $\tilde{\tilde{1}}(x) = \check{\Phi}(x)$. 
\end{proof}

\subsection{Removing Restrictions}
Throwing in those pairs $(n_1, n_2)$ for which  $n_1 \not = n_2$, we may bound using Lemma \ref{ML}

\begin{eqnarray*}
&&\sum_{ m \in \mathbb{Z}}~~\sum_{n_1\in \mathbb{Z}} \sum_{\kappa \in \mathbb{Z}} ~\sum_{l: |l- \kappa 2^k| \lesssim 2^k} 2^{-2k}c_{1, n_1}^{m} c_{2, n_1+l}^{m}\tilde{1}_{2^kI_{n_1}} (x)  \tilde{1}_{2^k I_{n_1+l }}(x) e^{2 \pi i 2^{-k-1}l} e^{2 \pi i \Gamma 2^{-k} (n_1+l)} F_{2^k} \left(2^{-k} l\right) \\ &=& \sum_{\kappa \in \mathbb{Z}} ~\sum_{ l:|l-\kappa 2^k| \lesssim 2^k}2^{-k}F_{2^k} \left(2^{-k} l\right) e^{2 \pi i 2^{-k-1}l} \left[\sum_{ m \in \mathbb{Z}}~~\sum_{n_1\in \mathbb{Z}}  2^{-k} c_{1, n_1}^{m} c_{2, n_1+l}^{m}\tilde{1}_{2^kI_{n_1}}   \tilde{1}_{2^k I_{n_1+l }} e^{2 \pi i \Gamma 2^{-k} (n_1+l)} \right] \\ &=&\sum_{\kappa \in \mathbb{Z}}~ \sum_{l: |l-\kappa 2^k| \leq 2^{k-1}} \frac{1}{\langle \kappa \rangle^2 \langle l \rangle^2}\left[\sum_{ m \in \mathbb{Z}}~~\sum_{n_1\in \mathbb{Z}}   c_{1, n_1}^{m} c_{2, n_1+l}^{m} \tilde{\tilde{1}}_{I_{n_1+l}}\right]*\psi^l_k.
\end{eqnarray*}
Hence, 
\begin{eqnarray*}
&& \left| \left| \left( \sum_{k \in \mathbb{Z}} \left| \sum_{ m \in \mathbb{Z}}~~\sum_{n_1\in \mathbb{Z}} \sum_{\kappa \in \mathbb{Z}} \sum_{l : | l - \kappa 2^k| \leq 2^{k-1}} 2^{-2k} c_{1, n_1}^{m} c_{2, n_2}^{m}\tilde{1}_{2^kI_{n_1}}   \tilde{1}_{2^k I_{n_1+l }}e^{2 \pi i 2^{-k-1}l}e^{2 \pi i \Gamma 2^{-k} (n_1 +l)} F_{2^k} \left(2^{-k} l\right) \right|^2 \right)^{1/2}\right| \right|_{\frac{p_1 p_2}{p_1 + p_2}} \\ &\lesssim&  \sum_{\kappa, l \in \mathbb{Z}} \frac{1}{\langle \kappa \rangle^2 \langle l \rangle ^2} \left| \left| \left( \sum_{k \in \mathbb{Z}} \left| \left[\sum_{ m \in \mathbb{Z}}~~\sum_{n_1\in \mathbb{Z}}   c_{1, n_1}^{m} c_{2, n_1+l}^{m} \tilde{1}_{I_{n_1+l}} \right] *\psi^l_k\right|^2 \right)^{1/2} \right| \right|_{\frac{p_1 p_2}{p_1 + p_2}} \\ &\lesssim&  \sup_{l \in \mathbb{Z}} \left| \left| \left( \sum_{k \in \mathbb{Z}} \left|  \left[\sum_{ m \in \mathbb{Z}}~~\sum_{n_1\in \mathbb{Z}}   c_{1, n_1}^{m} c_{2, n_1+l}^{m} \tilde{1}_{I_{n_1+l}}*\psi^l_k\right] \right|^2 \right)^{1/2} \right| \right|_{\frac{p_1 p_2}{p_1 + p_2}} .
\end{eqnarray*}
It therefore suffices to prove estimates for $\left( \sum_{k \in \mathbb{Z}} \left|  \left[\sum_{ m \in \mathbb{Z}}~~\sum_{n_1\in \mathbb{Z}}   c_{1, n_1}^{m} c_{2, n_1+l}^{m}\tilde{1}_{I_{n_1}}*\psi^l_k\right] \right|^2 \right)^{1/2}$ that are independent of $l \in \mathbb{Z}$. However, this is immediate once we use the boundedness of the square function $S:L^p \rightarrow L^p$ for every $1 <p <\infty$ and each $l \in \mathbb{Z}$ together the fact that $\{\psi_k^l\}$ are uniformly Mikhlin.

Because of the rapid decay in the remaining parameters $\mu_1, \mu_2, \lambda_1, \lambda_2, \kappa$, one expects to use triangle inequality to sum over these parameters outside the $L^p$ norm and reducing matters to proving uniform estimates inside the $L^p$ norm.  This is indeed the case, as we now show.  Changing variables in $\mu_1, \mu_2$, and using the triangle inequality, it suffices to estimate the sum over $k, m, n_1, \Delta_1, \Delta_2, \kappa$ and $l:|l-\kappa2^k|\leq  2^{k-1}$ of

\begin{eqnarray*}
 2^{-2k} d^{\lambda_1} _{m, m+\Delta_1} d^{\lambda_2}_{m, m+\Delta_2} c_{1, n_1}^{m+\Delta_1}c_{2, n_1+\tilde{l} + \frac{\lambda_2-\lambda_1}{2} }^{m+\Delta_2}\tilde{1}_{2^kI_{n_1-\frac{\lambda_1}{2}}}(x)    \tilde{1}_{2^k I_{n_1+\tilde{l} - \frac{\lambda_1}{2} } }(x) e^{2 \pi i \Gamma 2^{-k} (n_1 +\tilde{l} +\frac{\lambda_2-\lambda_1}{2})} F_{2^k} \left(2^{-k} \tilde{l}\right).
\end{eqnarray*}
Define $\tilde{l} := n_2 -n_1 + \frac{(\lambda_1 - \lambda_2)}{2} $. Cheaply bringing the summations over $\Delta_1, \Delta_2, \lambda_1, \lambda_2$ outside the $L^p$ norm, using Lemma \ref{ML}, note that the sum in question is majorized by
\begin{eqnarray*}
\sum_{\Delta_1, \Delta_2, \lambda_1, \lambda_2, \kappa, \tilde{l}}  \frac{\left| \left|  \left( \sum_{k \in \mathbb{Z}} \left|  \left[\sum_{ m \in \mathbb{Z}}~~\sum_{n_1\in \mathbb{Z}}  \tilde{d}^{\lambda_1}_{m, m+\Delta_1} \tilde{d}^{\lambda_2}_{m,m+\Delta_2}  c_{1, n_1}^{m+\Delta_1} c_{2, n_1+\tilde{l} +\frac{\lambda_2-\lambda_1}{2}}^{m+\Delta_2} \tilde{\tilde{1}}_{I_{n_2-\frac{\lambda_2}{2}}}*\psi^{\tilde{l}, \kappa, \lambda_1, \lambda_2}_k\right] \right|^2 \right)^{1/2} \right| \right|_{\frac{p_1 p_2}{p_1 + p_2}}}{\langle l \rangle ^2 \langle \kappa \rangle^2 \langle \Delta_1 \rangle ^N \langle \Delta_2 \rangle ^N \langle \lambda_1 \rangle ^N \langle \lambda_2 \rangle^N}  ,
\end{eqnarray*}
 where $\{ \psi_k^{\tilde{l}, \kappa, \lambda_1, \lambda_2} \}$ forms a Littlewood-Paley decomposition and is uniformly Mikhlin  in the sense that
\begin{eqnarray*}
\left| \left| \frac{d^N}{d\xi^N} \mathcal{F}\left(\psi_k^{\tilde{l}, \kappa, \lambda_1, \lambda_2} \right) \right| \right|_{L^\infty(\mathbb{R})} \lesssim 2^{kN}
\end{eqnarray*}
for all $k  \geq 0$ and sufficiently many derivatives. 
It therefore suffices to estimate
\begin{eqnarray*}
&& \left| \left|  \left( \sum_{k \in \mathbb{Z}} \left|  \left[\sum_{ m \in \mathbb{Z}}~~\sum_{n_1\in \mathbb{Z}}  \tilde{d}^{\lambda_1}_{m, m+\Delta_1} \tilde{d}^{\lambda_2}_{m,m+\Delta_2}  c_{1, n_1}^{m+\Delta_1} c_{2, n_1+\tilde{l} +\frac{\lambda_2-\lambda_1}{2}}^{m+\Delta_2} \tilde{\tilde{1}}_{I_{n_2-\frac{\lambda_2}{2}}}*\psi^{\tilde{l}, \kappa, \lambda_1, \lambda_2}_k\right] \right|^2 \right)^{1/2} \right| \right|_{\frac{p_1 p_2}{p_1 + p_2}}
\end{eqnarray*}
with a bound independent of the parameters $\Delta_1, \Delta_2, \lambda_1, \lambda_2, \kappa$, and $\tilde{l}$. 

Since $\frac{1}{p_1} + \frac{1}{p_2} <1$, observe
\begin{eqnarray*}
&&\left| \left|   \sum_{ m \in \mathbb{Z}}~~\sum_{n_1\in \mathbb{Z}}  \tilde{d}^{\lambda_1}_{m, m+\Delta_1} \tilde{d}^{\lambda_2}_{m, m+\Delta_2} c_{1, n_1}^{m+\Delta_1} c_{2, n_1+\tilde{l} +\frac{\lambda_2-\lambda_1}{2}}^{m+\Delta_2} \tilde{\tilde{1}}_{I_{n_2-\frac{\lambda_2}{2}}} \right| \right|_{\frac{p_1 p_2}{p_1 + p_2}}  \\ &\lesssim& \left| \left|   \left( \sum_{n_1\in \mathbb{Z}}\sum_{ m_1 \in \mathbb{Z}}   |c_{1, n_1}^{m_1}|^2\left| \tilde{ \tilde{1}}_{I_{n_2-\frac{\lambda_2}{2}}}\right| \right)^{1/2} \left( \sum_{n_2 \in \mathbb{Z}}\sum_{m_2 \in \mathbb{Z}} \left|c_{2, n_2+\tilde{l} + \frac{\lambda_2 - \lambda_1}{2}}^{m_2}\right|^2 \left| \tilde{\tilde{1}}_{I_{n_2-\frac{\lambda_2}{2}}}\right|  \right)^{1/2}\right| \right|_{\frac{p_1 p_2}{p_1 + p_2}}   \\ &\lesssim& \left| \left|  \left[ \sum_{n_1\in \mathbb{Z}}\left( \sum_{ m_1 \in \mathbb{Z}}   |c_{1, n_1}^{m_1}|^2 \right)^{1/2} \left| \tilde{\tilde{1}}_{I_{n_1-\frac{\lambda_2}{2}}}  \right|\right] \left[ \sum_{n_2 \in \mathbb{Z}}  \left( \sum_{m_2 \in \mathbb{Z}} \left|c_{2, n_2+\tilde{l} + \frac{ \lambda_2 - \lambda_2}{2}}^{m_2}\right|^2  \right)^{1/2} \left| \tilde{\tilde{1}}_{I_{n_2-\frac{\lambda_2}{2}}} \right|\right] \right| \right|_{\frac{p_1 p_2}{p_1 + p_2}}  \\ &\lesssim& || f_1||_{p_1} || f_2||_{p_2}.
\end{eqnarray*}
Like before, the assumption $2 \leq p_1, p_2 <\infty$ is necessary to achieve the last inequality. 
\end{proof}

\small
\nocite{*}
\bibliography{MVC}{}
\bibliographystyle{plain}

 \end{document}